\newcommand\blfootnote[1]{%
  \begingroup
  \renewcommand\thefootnote{}\footnote{#1}%
  \addtocounter{footnote}{-1}%
  \endgroup
}
\newtheorem{proposition}{Proposition}[section]
\newtheorem{theorem}{Theorem}[section]
\newtheorem{lemma}{Lemma}[section]
\newtheorem{corollary}{Corollary}[section]
\newtheorem{definition}{Definition}[section]
\newtheorem{remark}{Remark}[section]
\newtheorem{algorithm}{Algorithm}[section]
\def\dist{{\rm dist\,}}
\def\diam{{\rm diam\,}}
\def\vec0{\mbox{\boldmath $0$}}
\title{Identifying codes in line digraphs
\thanks{This research is partially supported by AGAUR under project 2017SGR1087. }
}
\author{ C. Balbuena$^{1}$, C. Dalf\'o$^{2}$, B. Mart\'\i nez-Barona$^{1}$
 \\[2ex]
$^1${\footnotesize Departament d'Enginyeria Civil i Ambiental}\\
{\footnotesize Universitat Polit\`ecnica de Catalunya}\\
$^2${\footnotesize Departament de Matem\`atica}\\
{\footnotesize Universitat de Lleida }\\
{\footnotesize e-mails:\{m.camino.balbuena,berenice.martinez\}@upc.edu, cristina.dalfo@matematica.udl.cat}
}
\date{}
\begin{document}

\maketitle

\blfootnote{
\begin{minipage}[l]{0.3\textwidth} \includegraphics[trim=10cm 6cm 10cm 5cm,clip,scale=0.15]{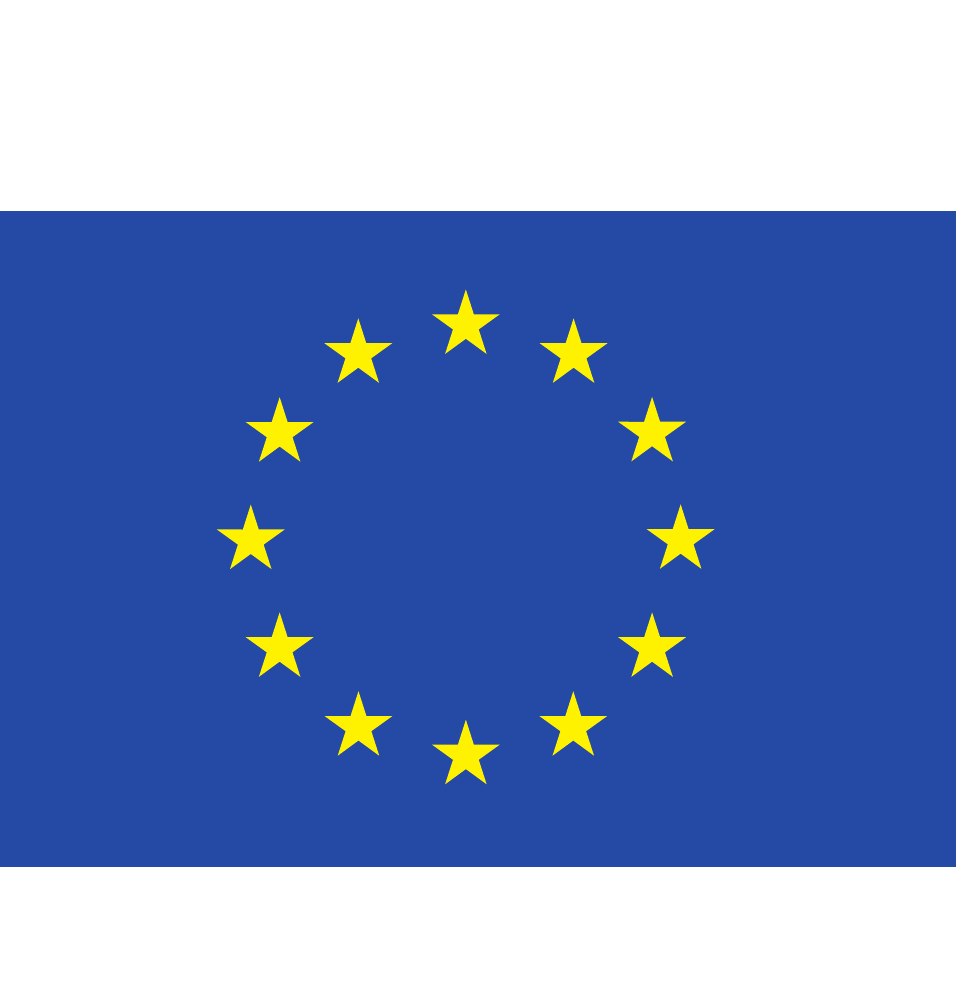} \end{minipage}  \hspace{-2cm} \begin{minipage}[l][1cm]{0.79\textwidth}
The research of the second author has also received funding from the European Union's Horizon 2020 research and innovation programme under the Marie Sk\l{}odowska-Curie grant agreement No 734922.
  \end{minipage}}

\begin{abstract}
Given an integer $\ell\ge 1$, a $(1,\le \ell)$-identifying code in a digraph is a dominating subset $C$ of
vertices such that all distinct subsets of vertices of cardinality at most $\ell$ have distinct closed in-neighbourhood within $C$. In this paper, we prove that every $k$-iterated line digraph of minimum in-degree at least 2 and $k\geq2$, or minimum in-degree at least 3 and $k\geq1$, admits a $(1,\le \ell)$-identifying code with $\ell\leq2$, and in any case it does not admit a $(1,\le \ell)$-identifying code for $\ell\geq3$. Moreover, we find that the identifying number of a line digraph is lower bounded by the size of the original digraph minus its order. Furthermore, this lower bound is attained for oriented graphs of minimum in-degree at least 2.
\end{abstract}

\noindent{\em Mathematics Subject Classifications:} 05C69, 05C20.

\noindent{\em Keywords:} Line digraph; identifying code; dominating set; separating set.

\section{Introduction}

In this paper we study the concept of $(1,\leq\!\ell)$-identifying codes in line digraphs, where $\ell\ge1$ is an integer.
In~\cite{BaDaMa17}, the authors studied the $(1,\leq\!\ell)$-identifying codes in digraphs, and gave some sufficient conditions for a digraph of minimum in-degree  $\delta^-\ge 1$ to admit a $(1,\le \ell)$-identifying code  for $\ell=\delta^-, \delta^-+1$.
Regarding line graphs, Foucaud, Gravier, Naserasr, Parreau, and Valicov~\cite{FGNPV13} studied $(1,\le 1)$-identifying codes  and Junnila and Laihonen~\cite{JL14} studied $(1,\le\ell)$-identifying codes for $\ell\ge 2$. 

We consider simple digraphs without loops or multiple edges. Unless otherwise stated, we follow the book by Bang-Jensen and Gutin \cite{BG07} for terminology and definitions.

Let $D$ be a digraph with vertex set $V(D)$ and arc set $A(D)$.
A vertex $u$ is \emph{adjacent to} a vertex $v$ if $(u,v)\in A(D)$. If both arcs $(u,v),(v,u)\in A(D)$,  then we say that they form a \emph{digon}. A digraph is \emph{symmetric} if $(u,v)\in A(D)$ implies $(v,u)\in A(D)$, so it can be studied as a graph. A digon is often referred as a \emph{symmetric arc} of $D$. An \emph{oriented graph} is a digraph without digons.
The \emph{out-neighborhood} of a vertex $u$ is $N^+(u)=\{v\in V: (u,v)\in A(D)\}$ and the \emph{in-neighborhood} of  $u$ is $N^-(u)=\{v\in V(D): (v,u)\in A(D)\}$. The {\em closed in-neighbourhood} of $u $ is $N^-[u]=\{u\}\cup N^-(u)$. Given a vertex subset $U\subset V(D)$, let $N^-[U]=\bigcup_{u\in U}N^-[u]$ and $N^+[U]=\bigcup_{u\in U}N^+[u]$.
\emph{A dominating set} is a subset of vertices $S\subseteq V$ such that $N^+[S]=V$.
The \emph{out-degree} of $u$ is $d^{+}(u)=|N^+(u)|$, and its \emph{in-degree}  $d^{-}(u)=|N^-(u)|$. We denote by $\delta^+=\delta^+(D)$ the minimum out-degree of the vertices in $D$, and by $\delta^-=\delta^-(D)$ the minimum in-degree. The minimum degree is $\delta=\delta(D)=\min\{\delta^+ , \delta^- \}$.
A digraph $D$ is said to be $d$-\emph{in-regular} if $|N^-(v)|=d$ for all $v\in V(D)$,  and  $d$\emph{-regular} if $|N^+(v)|=|N^-(v)|=d$ for all $v\in V(D)$. A path from $u $ to $v$ is denoted by $(u\ldots v)$ or also by $u\rightarrow v$. A path of order $k$ is called a $k$-path.
A digraph $D$ is said to be \emph{strongly connected} when, for any pair of vertices $u,v\in V(D)$, there always exists a $u\rightarrow v$ path.
For any pair of vertices $u,v\in V(D)$ we denote by $\dist(u,v)$ the distance from $u$ to $v$ in $D$, that is, $\dist(u,v)=\min\{k\mid\; \mbox{there is a $k$-path in $D$ from $u$ to $v$}\}$.
For each vertex $v\in V(D)$, we denote by $\omega^-(v)=\{(u,v)\in A(D)\}$ and $\omega^+(v)=\{(v,u)\in A(D)\}$.
For a natural number $k$, a $k$-cycle is a directed cycle of order $k$.

For a given integer $\ell\ge 1$, a vertex subset $C\subset V(D)$ is a {\em $(1,\leq\ell)$-identifying code} in $D$ if it is a dominating set and for all distinct subsets $X,Y\subset V(D)$, with $1\le |X|,|Y|\leq\ell$, we have
\begin{equation}\label{code}
N^-[X]\cap C \neq N^-[Y]\cap C.
\end{equation}
The definition of a $(1,\leq\ell)$-identifying code for graphs was introduced by Karpovsky, Chakrabarty and Levitin~\cite{KCL98}, and its definition can be obtained from (\ref{code}) by omitting the superscript signs minus. Thus, the definition for digraphs  is a natural extension  of the concept of  $(1,\le \ell)$-identifying codes in graphs. A $(1,\le 1)$-identifying code is known as an \emph{identifying code}.
Thus, an identifying code of a graph is a dominating set, such that any two vertices of the graph have distinct closed neighborhoods within this set.
Identifying codes model fault-diagnosis in multiprocessor systems, and these are used in other applications, such as the design of emergency sensor networks. For more information on these applications, see Karpovsky, Chakrabarty, and Levitin~\cite{KCL98} and Laifenfeld, Trachtenberg, Cohen and Starobinski~\cite{LTCS07}.

Note that if $C$ is a $(1,\leq \ell)$-identifying code in a digraph $D$, then the whole set of vertices $V(D)$ also is. Thus, a digraph $D$ admits some $(1,\leq\ell)$\emph{-identifying code} if and only if
for all distinct subsets $X,Y\subset V(D)$ with $|X|,|Y|\leq \ell$, we have
\begin{equation}
N^-[X] \neq N^-[Y].
\end{equation}


We recall that a \emph{transitive tournament} of 3 vertices is denoted by $TT_3$, as shown in Figure~\ref{fig:tt3}.
\begin{remark}
\label{TT3}
Let $D$ be a $TT_3$-free digraph. Then, for every arc $(x,y)$ of $D$, we have $N^-(x)\cap N^-(y)=\emptyset$ and $N^+(x)\cap N^+(y)=\emptyset$.
\end{remark}

\begin{figure}[t]
    \begin{center}
  \includegraphics[width=2cm]{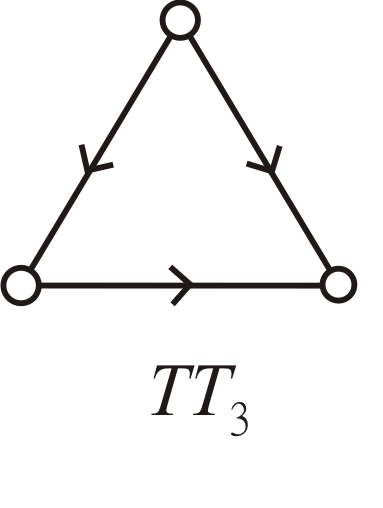}
  \end{center}
  \vskip-1cm
  \caption{A transitive tournament on 3 vertices.}
  \label{fig:tt3}
\end{figure}

\section{Identifying codes in line digraphs}

In the line digraph $LD$ of a digraph $D$, each vertex  represents an arc of $D$. Thus, $V(LD)=\{ uv : (u,v) \in A(D)\} $; and  a vertex $uv$ is adjacent to a vertex $wz$ if and only if $v=w$, that is, when  the arc $(u,v)$ is adjacent to the arc $(w,z)$ in $D$. For any integer $k\geq1$,  the {\it  $k$-iterated line digraph} $L^kD$ is defined recursively by  $L^kD=LL^{k-1}D$, where $L^0D=D$. From the definition, it is evident that the order of $LD$  equals the size of $D$, that is, $|V(LD)| = |A(D)|$.
Due to the bijection between the set of arcs in the digraph D and the set of vertices in the digraph $LD$, when it is clear from the context, we use $uv$ to denote both the arc in $A(D)$ and the vertex in $V(LD)$. Hence, for each vertex $v\in V(D)$, the set of arcs $\omega^+(v)$ in $D$ corresponds to a set of vertices in $LD$.
If $D$ is a strongly connected digraph different from a directed cycle with minimum degree $\delta$,  then the iterated line digraph $L^k D$ has minimum degree $\delta$ and diameter $\diam(L^kD)=\diam(D)+k$.
See  Aigner \cite{A67}, Fiol, Yebra, and Alegre \cite{FYA84}, and Reddy,
Kuhl, Hosseini, and Lee \cite{RKHL82}.

A large known family of digraphs obtained with the line digraph technique is the family of Kautz digraphs. The {\it Kautz digraph}  of degree $d$ and diameter $k$ is defined as the $(k-1)$-iterated line digraph of the symmetric complete digraph of $d+1$ vertices $K_{d+1}$, that is, $K(d,k)\cong L^{k-1}K_{d+1}$.
For instance, the Kautz digraph $K(2,2)$, shown in Figure \ref{fig:exemple}, is the line digraph of the symmetric complete digraph on three vertices.

Line digraphs were characterized by Heuchenne \cite{H64} with the following property: A digraph $D$ is a line digraph if and only if it has no multiple arcs, and for any pair of vertices $u$ and $v$, either $N^-(u)\cap N^-(v)=\emptyset$ or $N^-(u)= N^-(v)$. A similar characterization is obtained replacing $N^-$ by $N^+$.

The \emph{semigirth} $\gamma$ was defined by F\`{a}brega and Fiol~\cite{FF89}  as follows.
\begin{definition}\label{semigirth} ~\cite{FF89}
Let $D$ be a digraph with minimum degree $\delta$. Let $\gamma =\gamma(D)$, for $1\leq\gamma\leq \diam (D)$, be the greatest integer such that, for any $x,y\in V(G)$:
\begin{enumerate}
\item if $\dist(x,y)< \gamma $, the shortest $x\rightarrow y$ path is unique and there are no paths of length $\dist(x,y)+1$; \item if $\dist(x,y)=\gamma $, there is only one shortest $x\rightarrow y$ path.
\end{enumerate}
\end{definition}
Note that, as $D$ has no loops, $\gamma\ge 1$. In \cite{FF89} it was also proved that, if $D$ is a strongly connected digraph without loops and different from a directed cycle, then $\gamma(L^kD)=\gamma+k$.

\begin{figure}[t]
\vskip-.75cm
    \begin{center}
  \includegraphics[width=10cm]{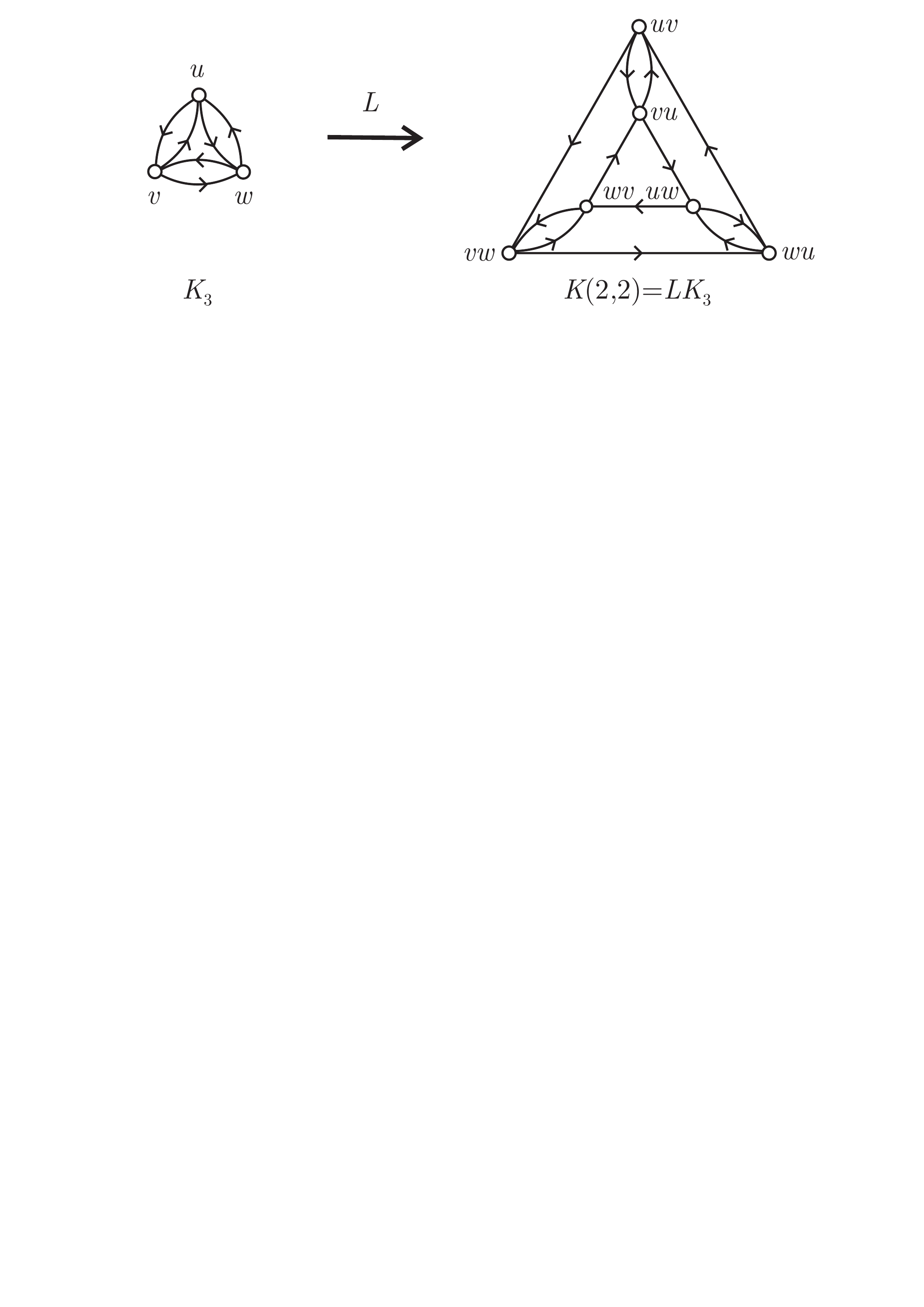}
  \end{center}
  \vskip-11.5cm
  \caption{The Kautz digraph $K(2,2)$ as the line digraph of the symmetric complete digraph $K_{3}$.}
  \label{fig:exemple}
\end{figure}

From now on we are going to consider strongly connected digraphs.

\begin{remark}
 \label{RemarkObsLineDig}
If $D$ is a digraph with $\gamma(D)\ge 2$, then:\begin{enumerate}
\item[(i)] $D$ is $TT_3$-free,
\item[(ii)] the paths of length two are unique.
\end{enumerate}
\end{remark}
Observe that for any line digraph $LD$ different from a directed cycle, $\gamma(LD)\ge 2$, therefore by Remark \ref{RemarkObsLineDig}, any line digraph is $TT_3$-free. As a consequence, we can write the following result.

\begin{proposition}\label{uno}
The line digraph of a strongly connected digraph of order at least $3$ admits a $(1,\leq1)$-identifying code. \quad $\Box$
\end{proposition}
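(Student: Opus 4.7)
The plan is to show that $V(LD)$ itself is a $(1,\le 1)$-identifying code in $LD$. Since $V(LD)$ is trivially a dominating set, and since by the observation in Section 1 a $(1,\le 1)$-identifying code exists if and only if every pair of distinct vertices has distinct closed in-neighbourhoods, it suffices to check that $N^-[X]\neq N^-[Y]$ for every pair of distinct vertices $X,Y\in V(LD)$.

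If $D$ is a directed cycle of order $n\ge 3$, then so is $LD$, and distinct vertices of $LD$ have distinct unique predecessors, whence their closed in-neighbourhoods differ. Otherwise $\gamma(LD)\ge 2$, and Remark \ref{RemarkObsLineDig}(i) tells us that $LD$ is $TT_3$-free. Suppose, for a contradiction, that two distinct vertices $X=uv$ and $Y=u'v'$ of $LD$ satisfy $N^-[X]=N^-[Y]$. From $Y\in N^-[X]$ and $Y\neq X$ we get $(Y,X)\in A(LD)$; symmetrically $(X,Y)\in A(LD)$, so $XY$ is a digon in $LD$ (corresponding in $D$ to the pair of arcs $(u,v)$ and $(v,u)$, in particular $u'=v$ and $v'=u$). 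Applying Remark \ref{TT3} to the arc $(X,Y)$ yields $N^-(X)\cap N^-(Y)=\emptyset$, and combined with $N^-[X]=N^-[Y]$ this forces $N^-(X)=\{Y\}$ and $N^-(Y)=\{X\}$.

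The remaining step is to translate this information back to $D$. By the definition of the line digraph, $N^-(X)$ in $LD$ is exactly the set of arcs of $D$ ending at $u$, so $|N^-(X)|=1$ forces $N^-(u)=\{v\}$ in $D$, and symmetrically $N^-(v)=\{u\}$. But then no arc of $D$ enters the set $\{u,v\}$ from outside, contradicting the strong connectivity of $D$ with $|V(D)|\ge 3$. The step that requires the most care is this last translation between in-neighbourhoods in $LD$ and in $D$; once the $TT_3$-free property has been used to trap the offending pair $X,Y$ into a digon, everything else falls into place.
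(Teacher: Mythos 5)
Your proof is correct and follows essentially the route the paper intends: the paper omits the proof, presenting the proposition as a consequence of line digraphs being $TT_3$-free, and your argument supplies exactly the missing details (reducing a violating pair to a digon with both in-degrees equal to $1$, then invoking strong connectivity and order at least $3$). You rightly noticed that $TT_3$-freeness alone does not finish the job and that the strong-connectivity hypothesis is genuinely needed at the last step.
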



The following result is a direct consequence of Remark \ref{RemarkObsLineDig} $(ii)$ and the definition of line digraph.

\begin{lemma}
\label{LemmLDPropieties}
\label{CorObsLineDig}
Let $LD$ be a line digraph.
\begin{enumerate}
\item[(i)] If $u,v\in V(D)$  are two different vertices such that $N^-(u)\cap N^-(v)\neq\emptyset$, then $N^+(u)\cap N^+(v)=\emptyset$.
\item[(ii)] There are no two digons incident with the same vertex.
\end{enumerate}
\end{lemma}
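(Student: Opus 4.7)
The plan is to prove \emph{(i)} directly from the structural description of line digraphs, and then derive \emph{(ii)} from \emph{(i)}. The authors' hint mentions Remark~\ref{RemarkObsLineDig}\,$(ii)$ (paths of length two are unique) together with the definition of line digraph, so I would set up both arguments and pick whichever handles edge cases most cleanly.

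For part \emph{(i)}, I would use the labelling of $V(LD)$ by arcs of $D$. Write $u=xy$ and $v=x'y'$ with $(x,y),(x',y')\in A(D)$. The definition of line digraph forces $N^-(u)=\{wx:w\in N_D^-(x)\}$ and $N^+(u)=\{yz:z\in N_D^+(y)\}$, and similarly for $v$. If $N^-(u)\cap N^-(v)\neq\emptyset$, a common in-neighbour must simultaneously be of the form $wx$ and $w'x'$, forcing $x=x'$; since $u\neq v$ this gives $y\neq y'$, and then any out-neighbour $yz$ of $u$ and $y'z'$ of $v$ differ in the first coordinate, so $N^+(u)\cap N^+(v)=\emptyset$. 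Equivalently, following the authors' hint, if there were $w\in N^-(u)\cap N^-(v)$ and $z\in N^+(u)\cap N^+(v)$, then $w\to u\to z$ and $w\to v\to z$ would be two distinct length-two paths, contradicting Remark~\ref{RemarkObsLineDig}\,$(ii)$.

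For part \emph{(ii)}, I would reduce to \emph{(i)}. Suppose two digons share a vertex $u$, say $\{(u,a),(a,u)\}$ and $\{(u,b),(b,u)\}$ with $a\neq b$. Then $u\in N^-(a)\cap N^-(b)$, so by \emph{(i)} we have $N^+(a)\cap N^+(b)=\emptyset$; however the arcs $(a,u)$ and $(b,u)$ put $u\in N^+(a)\cap N^+(b)$, a contradiction.

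The only real obstacle is a degenerate case in the path-based proof of \emph{(i)}: if the common in-neighbour $w$ coincides with the common out-neighbour $z$, then $w\to u\to w$ and $w\to v\to w$ are closed walks rather than simple paths, so Remark~\ref{RemarkObsLineDig}\,$(ii)$ may not directly apply (depending on whether ``path'' is taken to exclude repeated vertices). This is exactly why I would prefer the label-based argument, which does not branch on whether $w=z$. With this choice, both statements follow in a few lines from the definition of $LD$.
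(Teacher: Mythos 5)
Your proof is correct and follows exactly the route the paper itself indicates: the paper gives no written argument, merely asserting the lemma as ``a direct consequence of Remark~2.2~$(ii)$ and the definition of line digraph,'' and your label-based computation of $N^-(xy)$ and $N^+(xy)$ is the standard way to make that assertion precise. Your observation that the path-uniqueness version of $(i)$ is delicate when the common in-neighbour coincides with the common out-neighbour---which is precisely the two-digon configuration of part $(ii)$, so that route would risk circularity---is a genuine refinement, and your decision to rely on the labelling argument instead cleanly avoids it.
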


\begin{figure}[t]
    \begin{center}
  \includegraphics[width=10cm]{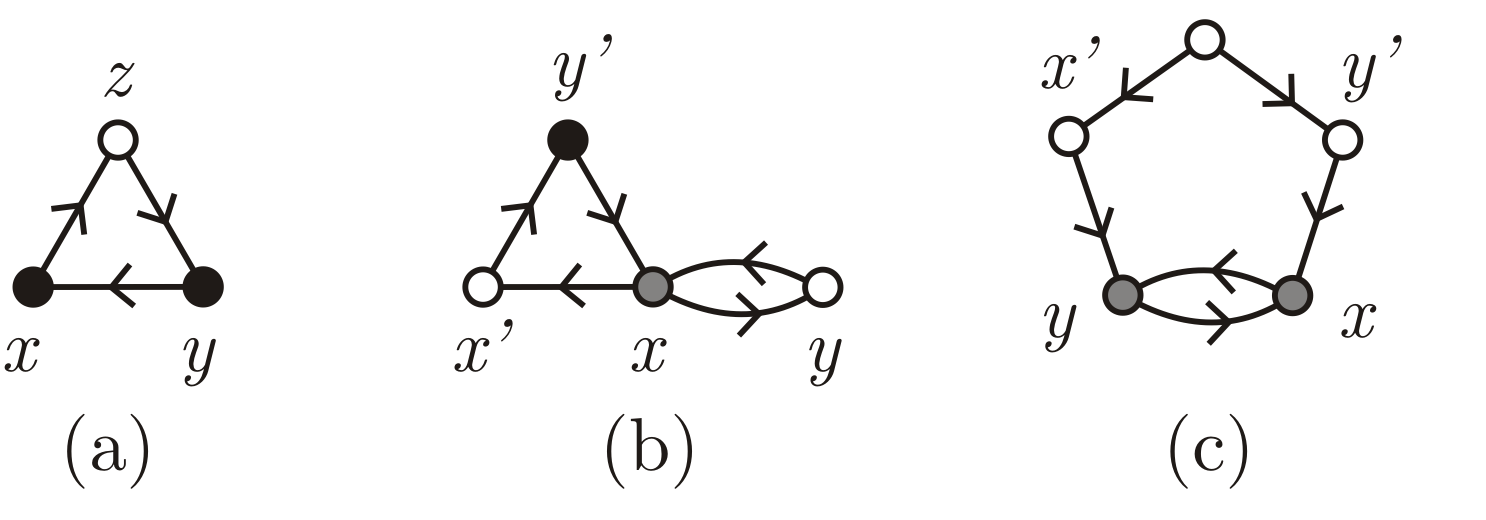}
  \end{center}
  \vskip-.7cm
  \caption{ The forbidden subdigraphs of Theorem \ref{LineIdentifyingdelta1} and Corollary \ref{LD(12)IDcode}.}
  \label{fig:conf-prohibidesLD(abc)}
\end{figure}

In~\cite{BaDaMa17}, the authors proved that if $D$ is a digraph of minimum in-degree $\delta^-$ admitting a $(1,\leq\delta^-+1)$-identifying code, then the vertices of minimum in-degree does not lay on a digon. In the following theorem, we give sufficient and necessary conditions for a line digraph to admit a $(1,\leq2)$-identifying code.

\begin{theorem}
\label{LineIdentifyingdelta1}
Let $LD$ be a line digraph different from a 4-cycle and such that the vertices of in-degree 1 (if any) does not lay on a digon.
 Then, $LD$ admits a $(1,\leq2)$-identifying code if and only if
 $LD$ satisfies the following conditions:
 \begin{itemize}
\item[$(i)$] there are no 3-cycles with at least 2 vertices of in-degree 1 (see Figure \ref{fig:conf-prohibidesLD(abc)} (a) where the vertices of in-degree one are indicated in black color);
\item[$(ii)$] there do not exist four vertices $x,x', y$ and $y'$ such that $N^-(x)=\{y,y'\}$, $N^-(y')=\{x'\}$ and $x\in N^-(x')\cap N^-(y)$ (see Figure \ref{fig:conf-prohibidesLD(abc)} (b) where the vertices of in-degree one are indicated in black color and the vertices of in-degree two in gray color);
\item[$(iii)$] there do not exist two vertices $x,y\in V(LD)$ such that $N^-(x)=\{y,y'\}$, $N^-(y)=\{x,x'\}$ and $N^-(x')\cap N^-(y')\neq\emptyset $ (see Figure \ref{fig:conf-prohibidesLD(abc)} (c) where the vertices of in-degree two are indicated in gray color).
\end{itemize}
\end{theorem}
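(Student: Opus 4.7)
My plan is to prove the forward direction by contrapositive. For each of the forbidden configurations (a), (b), (c) in Figure~\ref{fig:conf-prohibidesLD(abc)}, I would exhibit two distinct subsets $X, Y \subset V(LD)$ with $|X|, |Y| \leq 2$ such that $N^-[X] = N^-[Y]$, so no $(1,\leq 2)$-identifying code can exist. In (a), the in-degree-one vertices on the 3-cycle have fully prescribed closed in-neighbourhoods, and two suitably chosen 1- or 2-subsets absorb the third vertex on both sides to give the same union. In (b) and (c), the prescribed in-neighbourhoods of the labelled vertices $x, x', y, y'$ directly yield such a pair $(X, Y)$; the coincidence of $N^-[X]$ and $N^-[Y]$ can be read straight off the figures.

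\textbf{Sufficiency direction.} Assume (i)--(iii), that no in-degree-one vertex lies on a digon, and $LD$ is not a 4-cycle. I would show that $C = V(LD)$ itself is a $(1,\leq 2)$-identifying code, i.e., $N^-[X] \neq N^-[Y]$ for all distinct $X, Y$ with $1 \leq |X|, |Y| \leq 2$. The two main tools are Heuchenne's characterization (for any two vertices $u, v$, the sets $N^-(u)$ and $N^-(v)$ are either disjoint or identical, and symmetrically for $N^+$) and Lemma~\ref{LemmLDPropieties}. I would split into three cases according to $(|X|, |Y|)$. The case $(1,1)$ collapses quickly: if $N^-[x] = N^-[y]$ with $x \neq y$, then mutual membership forces a digon between $x$ and $y$, and Heuchenne's property then forces either $N^-(x) = N^-(y)$ (yielding a loop via $y \in N^-(y)$) or $N^-(x) = \{y\}$ and $N^-(y) = \{x\}$, meaning both $x, y$ have in-degree one and lie on a digon, contradicting the hypothesis. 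The case $(1,2)$ reduces, after analysing whether each $y_i \in Y$ equals $x$ or is an in-neighbour of $x$, either back to case $(1,1)$ or directly to configuration (a).

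\textbf{Case $(2,2)$ and main obstacle.} Write $X = \{x_1, x_2\}$ and $Y = \{y_1, y_2\}$ with $X \neq Y$; after pushing the subcases where $X \cap Y \neq \emptyset$ into the earlier cases, we may assume $X \cap Y = \emptyset$, and then $y_1, y_2 \in N^-[X]$ and $x_1, x_2 \in N^-[Y]$. Applying Heuchenne's property to $N^-$ and also to $N^+$ collapses any pair of coincident in- or out-neighbourhoods among the four vertices involved, and together with Lemma~\ref{LemmLDPropieties}(ii) this kills most branches. The surviving incidence patterns must realize, up to relabelling, either configuration (b), configuration (c), or the directed 4-cycle. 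The main difficulty is to bookkeep this branching cleanly, to recognize when the forced configuration is isomorphic to (b) or (c), and to separate out the 4-cycle possibility; verifying directly that the 4-cycle itself fails the $(1,\leq 2)$-identifying property is a short side-check that justifies its explicit exclusion in the hypothesis.
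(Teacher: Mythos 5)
Your overall strategy coincides with the paper's: necessity by exhibiting, for each forbidden configuration, a confounded pair of sets (for (b) and (c) the one step you cannot literally ``read off the figure'' is the application of Heuchenne's condition to upgrade $x\in N^-(x')\cap N^-(y)$, resp.\ $N^-(x')\cap N^-(y')\neq\emptyset$, to equality of those in-neighbourhoods), and sufficiency by supposing $N^-[X]=N^-[Y]$ and running a case analysis on $(|X|,|Y|)$ driven by Heuchenne's condition, $TT_3$-freeness and Lemma~\ref{LemmLDPropieties}. Your $(1,1)$ and, in substance, your $(1,2)$ arguments are sound.

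The genuine gap is the dismissal of the subcase $|X|=|Y|=2$ with $X\cap Y\neq\emptyset$. Writing $X=\{x,z\}$ and $Y=\{y,z\}$, the equality $N^-[\{x,z\}]=N^-[\{y,z\}]$ does \emph{not} imply $N^-[x]=N^-[y]$ or any statement about smaller sets, because the shared vertex $z$ absorbs the discrepancies (configuration (a) itself is a witness: there $N^-[\{x,z\}]=N^-[\{y,z\}]$ while $N^-[x]\neq N^-[y]$). So there is no reduction of this subcase to the earlier ones; it needs its own analysis, splitting on whether there is an arc between $x$ and $y$, and it is exactly where forbidden configuration (a) arises and where the hypothesis that in-degree-one vertices avoid digons is used again. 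Your outline instead attributes configuration (a) to the $(1,2)$ case, where it cannot occur (that case ends in an outright contradiction with $TT_3$-freeness). As written, your sufficiency argument would therefore never invoke condition $(i)$ where it is actually needed, and the proof cannot close. Separately, in the disjoint $(2,2)$ case the assertion that the surviving patterns realize (b), (c) or the $4$-cycle is the correct target, but it is stated rather than proved, and that branching is the bulk of the work.
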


\begin{proof}
Let $LD$ be a line digraph satisfying the hypothesis of the theorem. First, suppose that $LD$ does not satisfy $(i)$. Hence, let $(z,y,x,z)$ be a 3-cycle such that $d^-(x)=1=d^-(y)$ (see Figure \ref{fig:conf-prohibidesLD(abc)} (a)). Then, $N^-[\{x,z\}]=\{x,y\}\cap N^-[z]=N^-[\{y,z\}]$, implying that $LD$ does not admit an identifying code.
Second, suppose that $LD$ does not satisfy $(ii)$. Let $X=\{x,x'\}$ and $Y=\{y,y'\}$, where $x,x', y, y'$ are four different vertices of $LD$ such that $N^-(x)=\{y,y'\}$, $N^-(y')=\{x'\}$, and $x\in N^-(x')\cap N^-(y)$ (see Figure \ref{fig:conf-prohibidesLD(abc)} (b)). Hence, by the Heuchenne's condition $N^-(x')=N^-(y)$, it follows that
\begin{equation*}
\begin{split}
  N^-[X]&=N^-(x)\cup N^-(x')\cup\{x,x'\}\\
  &=\{y,y'\}\cup N^-(y)\cup \{x,x'\}\\
  &=\{y,y'\}\cup N^-(y)\cup \{x'\}\\
  &=\{y,y'\}\cup N^-(y)\cup N^-(y')\\
  &=N^-[Y].
  \end{split}
\end{equation*}
Therefore, $LD$ does not admit a $(1,\leq2)$-identifying code.
Now, suppose that $LD$ does not satisfy $(iii)$. Let $X=\{x,x'\}$ and $Y=\{y,y'\}$, where $N^-(x)=\{y,y'\}$, $N^-(y)=\{x,x'\}$, and $N^-(x')\cap N^-(y')\neq\emptyset$ (see Figure \ref{fig:conf-prohibidesLD(abc)} (c)).  Since, by the Heuchenne's condition $N^-(x')=N^-(y')$, it follows that
\begin{equation*}
\begin{split}
  N^-[X]&=N^-(x)\cup N^-(x')\cup\{x,x'\}\\
  &=\{y,y'\}\cup N^-(y')\cup N^-(y)\\
  &=N^-[Y].
  \end{split}
\end{equation*}
Therefore, $LD$ does not admit a $(1,\leq2)$-identifying code.

For the converse, let $X,Y\subset V(LD)$ be two different subsets such that $1\leq|X|\leq|Y|\leq2$ and $N^-[X]=N^-[Y]$.
By Proposition~\ref{uno}, $|Y|=2$.
If $|X|=1$, say $X=\{x\}$, then for all $y\in Y\setminus X$, since $N^-[Y]=N^-[X]=N^-[x]$, it follows $N^-[y]\subset N^-(x)$. Hence, $y\in N^-(x)$. If $d^-(y)=1$, there is at least one vertex $z\in N^-(y)\cap N^-(x)$ because there are no vertices of in-degree 1 laying on a digon,
and clearly the same happens if $d^-(y)\ge 2$,
  reaching a contradiction to Remark \ref{RemarkObsLineDig} $(i)$.
Hence, $|X|=2$, and there are two cases to be considered.
First, let us suppose that $X\cap Y\neq\emptyset$.  Let $X=\{x,z\}$ and $Y=\{y,z\}$.
If there is an arc between $x$ and $y$, say $yx\in A(LD)$, then by Remark \ref{RemarkObsLineDig} $(i)$, $N^-(x)\cap N^-(y)=\emptyset$. Then, $N^-(y)\subseteq N^-[z]$ and $N^-(x)\subseteq N^-[z]\cup\{y\}$.
Consider $d^-(x)\geq2$ and let $u\in N^-(x)\setminus\{y\}$. Hence, $u\in N^-[z]$. If $u=z$, then $N^-(x)\cap N^-(z)=\emptyset$. Hence, by Remark \ref{RemarkObsLineDig} $(i)$ and $(ii)$, $N^-[z]\cap N^-(y)=\emptyset$, implying that $N^-(y)=\emptyset$, a contradiction since $\delta^-\geq1$. Then, $u\in N^-(z)\cap N^-(x)$ implying, by the Heuchenne's condition that $N^-(z)= N^-(x)$, hence $y\in N^-(z)$, which is a contradiction since $N^-(y)\subseteq N^-[z]$.
Now suppose that $d^-(x)=1$, then $N^-(x)=\{y\}$. Since $x\in N^-[Y]$ and $x$ does not lay on a digon, $x\in N^-(z)$. Since, $x\notin N^-(y)$, $N^-(y)\cap N^-(z)=\emptyset$, implying that $N^-(y)=\{z\}$ because $N^-(y)\subseteq N^-[z]$. Therefore, $(x,z,y,x)$ is a 3-cycle of $LD$ with two vertices of in-degree 1, implying that $LD$ does not satisfy $(i)$.
Now, suppose that there are no arcs between $x$ and $y$. Since $x\in N^-[Y]$ and $y\in N^-[X]$, it follows that $x,y\in N^-(z)$. Hence, by Remark \ref{RemarkObsLineDig} $(i)$ and $(ii)$, $N^-(x)\cap (N^-(z)\cup N^-(y))=\emptyset$, implying that $N^-(x)=\{z\}$, a contradiction since there are no vertices of in-degree 1 laying on a digon.

Now let $X\cap Y=\emptyset$, with $X=\{x,x'\}$ and $Y=\{y,y'\}$.
In order to $y\in N^-[X]$, assume that $y\in N^-(x)$, that is, $yx\in A(LD)$. Then, by Remark \ref{TT3}, $N^-(x)\cap N^-(y)=\emptyset$ implying that $N^-(y)\subseteq N^-(x')\cup\{x,x'\}$.
Since $x\in N^-[Y]$, there are two cases to be considered.

First, suppose that $x\in N^-(y)$. Then, $d^-(x),d^-(y)\geq2$, since both vertices lay on a digon.
If there is $u\in N^-(y)\setminus(X\cup Y)$, then $u\in N^-(x')$, implying that $x\in N^-(x')$ by the Heuchenne's condition. Hence, since $x'\in N^-[Y]$ and $N^-(x')=N^-(y)$, it follows that $x'\in N^-(y')$. Furthermore, $y'\in N^-(x')$ or $y'\in N^-(x)$. If $y'\in N^-(x')$, then by the Heuchenne's condition $N^-(x')\cap N^-(y')=\emptyset$. Moreover, by Remark \ref{RemarkObsLineDig}, $x,y\notin N^-(y')$, implying that $N^-[X]\cap N^-(y')=\{x'\}$. Hence, $d^-(y')=1$, because $N^-[X]=N^-[Y]$, a contradiction because $y'$ lay on a digon.
If $y'\in N^-(x)$, then $N^-(y')\cap (N^-(x)\cup N^-(x'))=\emptyset$, implying that $N^-(y')=\{x'\}$ and $N^-(x)=\{y,y'\}$. Therefore, $LD$ does not satisfy $(ii)$.
Reasoning similarly for $x$ as we did for $y$, we can assume that $N^-(y)\subseteq X\cup Y$ and $N^-(x)\subseteq X\cup Y$. If $x'\in N^-(x)$, then $x'\notin N^-(y)$, implying that $y'\in N^-(y)$. Then, by Lemma \ref{LemmLDPropieties} $(ii)$ and Remark \ref{RemarkObsLineDig} $(i)$, $x,y\notin N^-(x')\cup N^-(y')$, implying that there is a vertex $u\in (N^-(x')\cap N^-(y'))\setminus(X\cup Y)$. Therefore, $LD$ does not satisfy $(iii)$.
If $x'\in N^-(y)$, then $y'\notin N^-(y)$, implying that $y'\in N^-(x)$. Then, by Lemma \ref{LemmLDPropieties} $(ii)$ and Remark \ref{RemarkObsLineDig} $(i)$, $x,y\notin N^-(x')\cup N^-(y')$, implying that there is a vertex $u\in (N^-(x')\cap N^-(y'))\setminus(X\cup Y)$. Therefore, $LD$ does not satisfy $(iii)$.

Now, suppose that $x\in N^-(y')\setminus N^-(y)$. Then, $N^-(x)\cap(N^-(y)\cup N^-(y'))=\emptyset$, implying that $N^-(x)=\{y\}$. Hence, since $y'\in N^-[X]$, it follows that $y'\in N^-(x')$, implying that $N^-(y')\cap(N^-(x')\cup N^-(x))=\emptyset$, and consequently $N^-(y')\subseteq\{x,x'\}$. Since $x'\in N^-[Y]$, there are two cases to be considered.
If $x'\in N^-(y)$, then $N^-(x')\cap(N^-(y)\cup N^-(y'))$, and by Lemma \ref{LemmLDPropieties} $(ii)$ and since $LD$ is $TT_3$-free, $N^-(x)=\{y'\}$, implying that $d^-(y)=d^-(y')=1$. Hence, $LD$ would be a 4-cycle, since $LD$ is a strongly connected digraph, a contradiction.
Therefore, $x'\in N^-(y')$, implying that $d(x')\geq2$. Since $x\in N^-(y')\setminus N^-(y)$ and $y'\in N^-(x')\setminus N^-(x)$, it follows that $N^-(y)\cap N^-(y')=\emptyset$ and $N^-(x)\cap N^-(x')=\emptyset$, respectively. Then, $x'\notin N^-(y)$ and $y\notin N^-(x')$. Then, there is $u\in N^-(x')\setminus (X\cup Y)$, implying that $u\in N^-(y)$ and, hence, $LD$ does not satisfy $(iii)$. Thus, this completes the proof.\end{proof}


Notice that, according to the above theorem, if a line digraph  with minimum in-degree $\delta^-\geq2$ does not admit a $(1,\leq2)$-identifying code, then $\delta^-=2$. Note that $\gamma(L^kD)=k+1\ge 3$ if $k\ge 2$, which implies by Definition \ref{semigirth} that $L^kD$ does not contain two vertices satisfying the hypothesis of Theorem \ref{LineIdentifyingdelta1}. Therefore, we have the following result.

\begin{corollary}
\label{LD(12)IDcode}
  Let $L^kD$ be a line digraph with minimum in-degree $\delta^-\geq2$.
  \begin{itemize}
    \item[(i)] If $k\geq2$, then $L^kD$ admits a $(1,\leq2)$-identifying code.
    \item[(ii)] If $k=1$ and $\delta^-\geq3$, then $LD$ admits a $(1,\leq2)$-identifying code.
    \item[(iii)] If $D$ is a 2-in-regular digraph and $k\geq1$, then $L^kD$ admits a $(1,\leq2)$-identifying code if and only if $L^kD$ does not contain the subdigraph of Figure \ref{fig:conf-prohibidesLD(abc)} $(c)$.
  \end{itemize}
\end{corollary}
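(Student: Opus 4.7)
The plan is to apply Theorem \ref{LineIdentifyingdelta1} by checking the absence (or, in item (iii), characterising the presence) of the three forbidden subdigraphs of Figure \ref{fig:conf-prohibidesLD(abc)}. The preliminary hypotheses of that theorem hold in all three items: since $\delta^-\geq 2$, the digraph $L^kD$ has no vertex of in-degree $1$, which makes the condition ``in-degree-$1$ vertices do not lie on a digon'' vacuous and also prevents $L^kD$ from being a $4$-cycle.

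For item (i), since $\delta^-\geq 2$ the digraph $D$ is not a directed cycle, so the semigirth formula $\gamma(L^kD)=\gamma(D)+k$ yields $\gamma(L^kD)\geq k+1\geq 3$. Subdigraphs (a) and (b) both contain a vertex of in-degree $1$, so $\delta^-\geq 2$ rules them out. For subdigraph (c), the common in-neighbour $z$ of $x'$ and $y'$ produces two directed paths to $x$: namely $(z,y',x)$ of length $2$ and $(z,x',y,x)$ of length $3$ (the last arc being $(y,x)$, which exists because $y\in N^-(x)$). By Definition \ref{semigirth} applied with $\gamma(L^kD)\geq 3$, these two paths cannot coexist: $\dist(z,x)\leq 2<\gamma$, and condition (1) of the definition then forbids any $z\to x$ path of length $\dist(z,x)+1$, contradicting the existence of one of the two paths above.

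Item (ii) is immediate: each of configurations (a), (b), (c) prescribes a vertex of in-degree at most $2$ (in-degree $1$ in (a) and (b); $d^-(x)=d^-(y)=2$ in (c)), so $\delta^-\geq 3$ excludes all three. For item (iii), since $L^kD$ inherits $2$-in-regularity from $D$, configurations (a) and (b) (which both require an in-degree-$1$ vertex) cannot occur, and Theorem \ref{LineIdentifyingdelta1} reduces the existence of a $(1,\leq 2)$-identifying code to the absence of configuration (c).

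The main point requiring care is in item (i): one must verify that the walk $z\to x'\to y\to x$ really is a simple path, i.e., that the five vertices $z,x,x',y,y'$ are distinct. The inequalities $x\neq x'$, $y\neq y'$, $x\neq y$, and $z\notin\{x',y'\}$ are immediate from the defining neighbourhoods and the absence of loops, while $x'\neq y'$ follows from Heuchenne's condition (a coincidence would force $N^-(x)=N^-(y)$ and hence $x=y$). For $z\neq x$ and $z\neq y$, I invoke Lemma \ref{LemmLDPropieties}(ii): either coincidence would create a second digon at $x$ (respectively $y$), in addition to the digon $x\leftrightarrow y$ built into configuration (c), which is forbidden in a line digraph.
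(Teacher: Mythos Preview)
Your proof is correct and follows the same approach as the paper, which disposes of the corollary in the short paragraph preceding it: use $\delta^-\geq 2$ to discard configurations (a) and (b) and to satisfy the preliminary hypotheses of Theorem~\ref{LineIdentifyingdelta1}, use $\delta^-\geq 3$ for item (ii), and for item (i) invoke $\gamma(L^kD)\geq k+1\geq 3$ together with Definition~\ref{semigirth} to exclude configuration~(c). Your additional care in the last paragraph---verifying that $z,x',y,x$ are pairwise distinct so that the length-$3$ walk is a genuine path---fills in a detail the paper leaves implicit.
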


\begin{corollary}
\label{K2kIdentifying}
For each $n\geq 3$, the Kautz digraph $K(n,2)=LK_{n+1}$ admits a $(1,\leq 2)$-identifying code.
\end{corollary}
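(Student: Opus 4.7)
The plan is to derive this as an immediate consequence of Corollary~\ref{LD(12)IDcode}(ii). First I would verify that the hypotheses of that result are satisfied by $K(n,2)=LK_{n+1}$. The symmetric complete digraph $K_{n+1}$ on $n+1$ vertices is $n$-regular, so every vertex has in-degree $n$. Since in a line digraph $LD$ the in-degree of a vertex $uv$ equals $d^-_D(u)$, we have $\delta^-(LK_{n+1})=\delta^-(K_{n+1})=n$. For $n\geq 3$ this gives $\delta^-(LK_{n+1})\geq 3$.

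Next, I would identify $K(n,2)=LK_{n+1}$ as a $k$-iterated line digraph with $k=1$. With $\delta^-\geq 3$ and $k=1$, Corollary~\ref{LD(12)IDcode}(ii) applies directly and yields that $K(n,2)$ admits a $(1,\leq 2)$-identifying code.

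There is no genuine obstacle here: the only thing to check is the numerical condition $\delta^-(K_{n+1})=n\geq 3$, which is guaranteed by the hypothesis $n\geq 3$. The statement is therefore a one-line specialization of the general line-digraph criterion already established.
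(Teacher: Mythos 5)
Your proposal is correct and matches the paper's (implicit) argument: the corollary is stated without proof precisely because it is an immediate application of Corollary~\ref{LD(12)IDcode}(ii) to $K(n,2)=LK_{n+1}$ with $k=1$ and $\delta^-(LK_{n+1})=n\geq 3$. Your verification of the in-degree condition is exactly the only check needed.
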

 By Corollary \ref{LD(12)IDcode} $(iii)$, the Kautz digraph $K(2,2)=LK_3$ (see Figure \ref{fig:exemple}) does not admit a $(1,\leq2)$-identifying code. Then, the condition $k\geq2$ in Corollary \ref{LD(12)IDcode} $(i)$ is necessary.

\begin{remark}\label{degree}Let $ D$ be a digraph with minimum in-degree $\delta^-\geq 2$. Then, there exists a vertex $u\in V( D)$ such that $d^+(u)\geq2$.
It is enough to observe that if  $d^+(u)<2$ for all $u\in V(D)$, then we would reach the contradiction:
$$
2|V(D)|\leq\sum_{v\in V(D)}d^-(v)=\sum_{v\in V(D)}d^+(v)\leq|V(D)|.
$$
Consequently, any line digraph $LD$ with minimum in-degree $\delta^-\geq 2$ contains at least two vertices with the same in-neighborhood by the Heuchenne's condition.
\end{remark}

\begin{proposition}
\label{(1,3)IDcodeLine}
Let $LD$ be a line digraph with minimum in-degree $\delta^-\geq2$, then $LD$ does not admit a $(1,\leq3)$-identifying code.
\end{proposition}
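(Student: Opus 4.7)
The plan is to produce two distinct subsets $X, Y \subset V(LD)$ with $|X|, |Y| \leq 3$ and $N^-[X] = N^-[Y]$, which directly shows that no $(1,\leq 3)$-identifying code can exist. By Remark \ref{degree}, there are two distinct vertices $x, y \in V(LD)$ with $N^-(x) = N^-(y) =: T$. Since $LD$ is strongly connected, $x$ has at least one out-neighbour $s$; I observe that $s \neq x$ (no loops) and $s \neq y$, because $s = y$ would force $x \in N^-(y) = N^-(x)$, a loop. I would then take $X = \{y, s\}$ and $Y = \{x, y, s\}$, so that $X \neq Y$ and $|X|, |Y| \leq 3$.

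The verification is a direct computation. Using $N^-(x) = N^-(y) = T$, expanding the closed in-neighbourhoods gives
\[
N^-[X] = \{y, s\} \cup T \cup N^-(s), \qquad N^-[Y] = \{x, y, s\} \cup T \cup N^-(s).
\]
These sets can differ only at $x$, and $x \in N^-[X]$ because $s$ being an out-neighbour of $x$ means $x \in N^-(s) \subseteq N^-[X]$. Hence $N^-[X] = N^-[Y]$, contradicting the separating requirement for any $(1,\leq 3)$-identifying code of $LD$.

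The only genuine subtlety is the choice of $s$. By Lemma \ref{LemmLDPropieties}(i), $N^+(x) \cap N^+(y) = \emptyset$ (because $N^-(x) \cap N^-(y) \neq \emptyset$), so a ``symmetric'' construction of the form $\{x, s'\}$ versus $\{y, s'\}$ with a common out-neighbour $s'$ is impossible. This is precisely why the asymmetric pair $\{y, s\}$ versus $\{x, y, s\}$ works: once $s$ is included in the subset, the vertex $x$ is automatically picked up via $N^-(s)$, and throwing $x$ into the larger subset contributes nothing new. The use of Remark \ref{degree} to produce the pair $x,y$ with identical in-neighbourhoods, plus strong connectedness to produce $s$, are the two structural inputs; after that, the argument is essentially bookkeeping.
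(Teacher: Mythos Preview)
Your proof is correct and is essentially identical to the paper's own argument: the paper likewise invokes Remark~\ref{degree} to obtain two vertices with equal in-neighbourhood, picks an out-neighbour $w$ of one of them (justifying $w\neq v$ via the $TT_3$-free property rather than your direct loop argument), and exhibits the pair $\{v,w\}$ and $\{u,v,w\}$. Your write-up is in fact slightly more explicit in the verification step and in ruling out $s=y$; the closing paragraph about Lemma~\ref{LemmLDPropieties}(i) is extraneous commentary but harmless.
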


\begin{proof}
By Remark \ref{degree}, there are two different vertices $u,v\in V(LD)$ such that $N^-(u)=N^-(v)$. Moreover, $LD$ has $\delta^+\geq1$ because it is strongly connected. Let $w\in N^+(u)$, thus by Remark \ref{TT3}, $w\ne v$.  Then, $X=\{u,v,w\}$ and $Y=\{v,w\}$ are two different sets such that $N^-[X]=N^-[Y]$, implying that $LD$ does not admit a $(1,\leq3)$-identifying code.
\end{proof}

\section{The identifying number of a line digraph}

Foucaud, Naserasr, et al. \cite{FGNPV13}  characterized the digraphs that only admit as identifying code the whole set of vertices. Let us introduce the terminology used for this characterization.

Given two digraphs $D_1$ and $D_2$ on disjoint sets of vertices, we denote $D_1\oplus D_2$ the disjoint union of $D_1$ and $D_2$, that is, the digraph whose vertex set is $V(D_1)\cup V(D_2)$ and whose arc set is $A(D_1)\cup A(D_2)$.
Given a digraph $D$ and a vertex $x\notin V(D)$, $x\overrightarrow{\vartriangleleft}(D)$  is the digraph with vertex set $V(D)\cup\{x\}$, and whose arcs are the arcs of $D$ together with each arc $(x,v)$ for every $v\in V(D)$.
\begin{definition}
We define $(K_1,\oplus,\overrightarrow{\vartriangleleft})$ to be the closure of the one-vertex graph $K_1$ with respect to the operations $\oplus$ and $\overrightarrow{\vartriangleleft}$. That is, the class of all graphs that can be built from $K_1$ by repeated applications of $\oplus$ and $\overrightarrow{\vartriangleleft}$.
\end{definition}

Foucaud, Naserasr, et al. \cite{FNP13} proved that for any digraph $D$, $\overrightarrow{\gamma}^{ID}(D)=|V(D)|$ if and only if $D\in(K_1,\oplus,\overrightarrow{\vartriangleleft})$.
 Since, as they pointed out, every element $D\in(K_1,\oplus,\overrightarrow{\vartriangleleft})$ is the transitive closure of a rooted oriented forest, if $LD$ is a line digraph with minimum in-degree $\delta^-\geq2$, then $LD\notin (K_1,\oplus,\overrightarrow{\vartriangleleft})$. Hence, $\overrightarrow{\gamma}^{ID}(LD)\leq|V(LD)|-1$, where $\overrightarrow{\gamma}^{ID}(D)$ denotes the minimum size of an identifying code of a digraph $D$. Next, we establish better upper bounds on $\overrightarrow{\gamma}^{ID}(LD)$.

With this goal, we define the relation $\sim$ over the set of vertices $V(LD)$ as follows. For all $u,v\in V(LD)$, $u\sim v$ if and only if $N^-(u)=N^-(v)$.
 Clearly, $\sim$ is an equivalence relation. For any $u\in V(LD)$, let $[u]_{\sim}=\{v\in V(LD): v\sim u\}$.

\begin{lemma}
 \label{RepEnC}
Let $D$ be digraph and $C$ an identifying code of $LD$. Then, for any vertex $w\in V(LD)$, $$|[w]_\sim\setminus C|\leq1.$$
\end{lemma}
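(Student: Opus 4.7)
The plan is a short proof by contradiction that exploits the definition of the equivalence relation $\sim$ together with the defining property of an identifying code for singleton subsets.

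Suppose, for contradiction, that there exist two distinct vertices $u,v \in [w]_\sim$ with $u,v \notin C$. Since $u \sim v$ means $N^-(u) = N^-(v)$, the closed in-neighborhoods satisfy
\[
N^-[u] = N^-(u) \cup \{u\} \quad\text{and}\quad N^-[v] = N^-(v) \cup \{v\} = N^-(u) \cup \{v\}.
\]
Intersecting with $C$ and using $u,v \notin C$ (so $\{u\} \cap C = \{v\} \cap C = \emptyset$), I obtain
\[
N^-[u] \cap C = N^-(u) \cap C = N^-(v) \cap C = N^-[v] \cap C.
\]
Taking the singletons $X = \{u\}$ and $Y = \{v\}$, which are distinct subsets of cardinality at most $\ell=1$, this contradicts condition \eqref{code} in the definition of an identifying code. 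Hence at most one vertex of $[w]_\sim$ can lie outside $C$, which is the stated bound.

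I expect no real obstacle here: the statement is essentially an immediate consequence of the identifying-code axiom applied to the two singletons, and no appeal to line-digraph structure (Heuchenne's condition, semigirth, Remark \ref{RemarkObsLineDig}, etc.) is required beyond the fact that $\sim$ was defined on $V(LD)$. The only small subtlety worth flagging is that we only need the weaker $(1,\leq 1)$-identifying property guaranteed by Proposition \ref{uno}, since the argument uses just singleton subsets; this will make the lemma a convenient tool in subsequent bounds on $\overrightarrow{\gamma}^{ID}(LD)$.
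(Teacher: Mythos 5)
Your proof is correct and is essentially identical to the paper's: both take two distinct vertices $u,v\in[w]_\sim\setminus C$, use $N^-(u)=N^-(v)$ and $u,v\notin C$ to conclude $N^-[u]\cap C=N^-[v]\cap C$, contradicting the separating property of the identifying code on singletons. Your closing remark that only the $(1,\le 1)$-identifying property is needed is accurate.
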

\begin{proof}
 Let $w\in V(LD)$ and $u,v\in[w]_\sim\setminus C$. Then, $N^-(u)=N^-(v)$ and, since $u,v\notin C$, it follows that $N^-[u]\cap C=N^-(u)\cap C=N^-(v)\cap C=N^-[v]\cap C,$ which is a contradiction if $u\neq v$.
\end{proof}

\begin{definition}
\label{defi-arc-idCode}
  Given a digraph $D$, a subset $C$ of $A(D)$ is an \emph{arc-identifying code} of $D$ if $C$ is both:
  \begin{itemize}
    \item  an arc-dominating set of $D$, that is, for each arc $uv\in A(D)$, $(\{uv\}\cup\omega^-[u])\cap C\neq\emptyset$, and
    \item  an arc-separating set of $D$, that is, for each pair $uv,wz\in A(D)$ $(uv\neq wz)$, $(\{uv\}\cup\omega^-[u])\cap C\neq (\{wz\}\cup\omega^-[w])\cap C$.
  \end{itemize}
\end{definition}

\noindent Hence, a line digraph $LD$ admits a $(1,\leq\ell)$-identifying code if and only if $D$ admits a $(1,\leq\ell)$-arc-identifying code. As a consequence, the minimum size of an identifying code of a digraph $D$, $\overrightarrow{\gamma}^{ID}(LD)$, is equivalent to the minimum size of an arc-identifying code of its line digraph $LD$.

Let $D$ be a digraph. We denote $V^+_{\geq2}(D)=\{v\in V(D): d^+(v)\geq2\}$, and $V^+_{1}(D)=\{v\in V(D): d^+(v)=1\}$. Hence, in particular, if $D$ is a strongly connected digraph, $V(D)=V^+_{1}(D)\cup V^+_{\geq2}(D)$.

\begin{theorem}
 \label{FirstLowerBound}
 Let $D$ be a strongly connected digraph with minimum in-degree $\delta^-\geq2$. Then,
$$\overrightarrow{\gamma}^{ID}(LD)\geq |A(D)|-|V(D)|.$$
\end{theorem}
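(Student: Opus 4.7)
The plan is to apply Lemma \ref{RepEnC} after identifying the equivalence classes of $\sim$ in $V(LD)$ explicitly.

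First, I would describe the classes. By the definition of line digraph, for any vertex $uv\in V(LD)$ we have $N^-_{LD}(uv)=\{wu : w\in N^-(u)\}=\omega^-(u)$. Hence two vertices $uv$ and $u'v'$ of $LD$ satisfy $uv\sim u'v'$ if and only if $\omega^-(u)=\omega^-(u')$. Since the arcs in $\omega^-(u)$ are precisely those with head $u$, the sets $\omega^-(u)$ and $\omega^-(u')$ are disjoint whenever $u\neq u'$, and both are nonempty because $\delta^-\geq 2$. Thus $uv\sim u'v'$ if and only if $u=u'$, so the equivalence class of $uv$ in $LD$ is exactly $\omega^+(u)$ (viewed as a set of vertices of $LD$).

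Next I would count the classes. Strong connectivity of $D$ gives $d^+(u)\geq 1$ for every $u\in V(D)$, so the family $\{\omega^+(u):u\in V(D)\}$ consists of exactly $|V(D)|$ nonempty sets, and they partition $V(LD)$, whose cardinality is $|A(D)|$.

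Finally I would apply Lemma \ref{RepEnC}. For any identifying code $C$ of $LD$, the lemma gives $|\omega^+(u)\setminus C|\leq 1$ for each $u\in V(D)$. Summing over the partition,
$$|A(D)\setminus C|=\sum_{u\in V(D)}|\omega^+(u)\setminus C|\leq |V(D)|,$$
which rearranges to $|C|\geq |A(D)|-|V(D)|$, as required.

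The argument is essentially a counting estimate, so I do not anticipate any substantive obstacle. The only point requiring care is the identification of the equivalence classes: one must observe that distinct tails $u\neq u'$ produce disjoint nonempty arc-neighborhoods $\omega^-(u)$ and $\omega^-(u')$, which prevents the $\sim$-partition from being any coarser than $\{\omega^+(u):u\in V(D)\}$, and that strong connectivity is exactly what makes all $|V(D)|$ of these classes nonempty.
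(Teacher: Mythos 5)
Your proof is correct and follows essentially the same route as the paper: both apply Lemma \ref{RepEnC} to the classes $\omega^+(u)$, which partition $V(LD)$ into $|V(D)|$ sets, and count. The paper phrases the count as $|C|\ge\sum_{u}(d^+(u)-1)$ with some bookkeeping over $V^+_{\ge 2}$ and $V^+_1$, while you sum $|\omega^+(u)\setminus C|\le 1$ directly; the extra step identifying the $\sim$-classes exactly is correct but not needed, since $\omega^+(u)\subseteq[uv]_\sim$ already suffices.
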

\begin{proof}
 By Remark \ref{RemarkObsLineDig} $(i)$, $LD$ admits an identifying code. Let $C$ be an arc-identifying code of $D$. Then, by Lemma \ref{RepEnC},
 \begin{equation*}
\begin{split}
  |C|&\geq\sum_{V^+_{\geq2}(D)}(d^+_D(u)-1)\\
  &=\sum_{V^+_{\geq2}(D)}d^+_D(u)-|V^+_{\geq2}(D)|+ \sum_{V^+_{1}(D)}d^+_D(u)-\sum_{V^+_{1}(D)}d^+_D(u)\\
  &= \sum_{V(D)}d^+_D(u)
  -|V^+_{\geq2}(D)|-|V^+_{1}(D)|\\
  &=|A(D)|-|V(D)|.
  \end{split}
\end{equation*}
\end{proof}

\begin{theorem}\label{CharacConditionsOverC}
Let $D$ be a strongly connected digraph of order at least 3, and let $C\subseteq A(D)$. Then, $C$ is an arc-identifying code of $D$ if and only if $C$ satisfies the following conditions:
\begin{itemize}
 \item[(i)] for all $v\in V(D)$, $|\omega^+(v)\setminus C|\leq1$, and if $|\omega^+(v)\setminus C|=1$, then $\omega^-(v)\cap C\neq\emptyset$;
\item[(ii)] for all  $uv\in C$, if $vu\in C$ or $|\omega^+(v)\setminus C|=1$, then $( (\omega^-(v)\cup\omega^-(u))\setminus\{uv, vu\})\cap C\neq\emptyset$.
\end{itemize}
\end{theorem}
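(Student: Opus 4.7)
The plan is to prove both implications separately, working throughout with the quantity $S(uv) := (\{uv\}\cup\omega^-(u))\cap C$, which corresponds to the closed in-neighborhood of the vertex $uv$ in $LD$ intersected with $C$. The key structural fact I will use repeatedly is that $\omega^-(u)\cap\omega^-(w)=\emptyset$ whenever $u\neq w$, since $D$ has no multiple arcs; hence the tail-contributions to $S(\cdot)$ at arcs with distinct heads lie in disjoint ambient sets. With this observation, arc-separation reduces to the question of when the two common elements shared by a pair of $S$-sets can leave the residues undistinguished.

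For the necessity direction, I would derive $(i)$ in two steps: if a vertex $v$ had two outgoing arcs $vu_1,vu_2\notin C$, then $S(vu_1)=S(vu_2)=\omega^-(v)\cap C$, contradicting arc-separation; and if a unique $vu\notin C$ exists, arc-domination of $vu$ forces $\omega^-(v)\cap C\neq\emptyset$. For $(ii)$, given $uv\in C$, I would identify the pair requiring separation: either the digon mate $vu\in C$ (pair $\{uv,vu\}$) or the unique uncoded outgoing arc $vw\notin C$ (pair $\{uv,vw\}$). In both cases $S(uv)$ and the mate's $S$-set share the arc $uv$ (via $uv\in\omega^-(v)\cap C$, together with $vu$ in the digon case), while the residues lie in the disjoint sets $\omega^-(u)\setminus\{vu\}$ and $\omega^-(v)\setminus\{uv\}$, so separation forces $((\omega^-(u)\cup\omega^-(v))\setminus\{uv,vu\})\cap C\neq\emptyset$.

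For the sufficiency direction, arc-domination is immediate from $(i)$: if $uv\notin C$, then $|\omega^+(u)\setminus C|\geq 1$ so $\omega^-(u)\cap C\neq\emptyset$. Arc-separation requires a case analysis on distinct arcs $e_1=uv$ and $e_2=wz$. The case $u=w$ follows directly from $(i)$, since at most one of $uv,uz$ lies outside $C$ and $uv\neq uz$ distinguishes $S(uv), S(uz)$. The generic case $u\neq w$, $v\neq w$, $u\neq z$ follows from $\omega^-(u)\cap\omega^-(w)=\emptyset$ together with domination, which forces each tail-contribution to be non-empty in disjoint universes. The delicate cases are the chains $v=w, u\neq z$ and $u=z, v\neq w$, and the digon $v=w, u=z$. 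In each such case I would assume $S(e_1)=S(e_2)$ for contradiction, use the disjointness to collapse the tail-contributions onto a common overlap ($\{uv\}$ or $\{uv, vu\}$), deduce via $(i)$ and arc-domination that the configuration forces either $vu\in C$ or $|\omega^+(v)\setminus C|=1$, and then invoke $(ii)$ to exhibit an arc of $C$ in $(\omega^-(u)\cup\omega^-(v))\setminus\{uv,vu\}$, directly contradicting the collapse.

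The main obstacle I anticipate is the disciplined bookkeeping of these final sub-cases. Specifically, after deriving that $S(e_1)=S(e_2)$ forces the tail-intersections with $C$ to vanish away from the overlap, one must trace through which of $uv$ or $wu$ is actually in $C$ (forced by domination) and which uncoded outgoing arc at the shared vertex produces $|\omega^+(v)\setminus C|=1$, so that $(ii)$ is genuinely invocable against the correct coded arc. Once this bookkeeping is in place, each case closes with a single line, but verifying that chains and digons together exhaust all $LD$-adjacency patterns of $e_1,e_2$ not already handled by $(i)$ is where completeness must be checked most carefully.
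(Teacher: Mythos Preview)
Your proposal is correct and follows essentially the same approach as the paper's proof: both directions hinge on the observation that $\omega^-(u)$ and $\omega^-(w)$ are disjoint for $u\neq w$, so that a failure of arc-separation forces either a same-tail configuration (handled by (i)) or a chain/digon configuration (handled by (ii)). The only difference is organizational: the paper's sufficiency argument splits first on whether both arcs lie outside $C$ (which immediately forces equal tails) versus at least one lying in $C$ (which immediately forces a chain, then possibly a digon), whereas you split geometrically on the relative positions of the endpoints; the underlying deductions are identical.
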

\begin{proof}
First suppose that $C$ is an arc-identifying code of $D$.
The first part of $(i)$ follows directly from Lemma \ref{RepEnC}. For the second one, let $v\in V(D)$ be such that $|\omega^+(v)\setminus C|=1$ and let $vx\in\omega^+(v)\setminus C$. Hence, $(\{vx\}\cup\omega^-(v))\cap C=\omega^-(v)\cap C$. Since $C$ is an arc-identifying code, $(\{vx\}\cup\omega^-(v))\cap C\neq\emptyset$, hence $C$ satisfies $(i)$.
To prove $(ii)$, let $uv\in C$ be such that $((\omega^-(u)\cup \omega^-(v))\setminus \{vu,uv\})\cap C=\emptyset$. If $vu\in C$, then $(\{uv\}\cup\omega^-(u))\cap C=\{uv,vu\}=(\{vu\}\cup\omega^-(v))\cap C$, contradicting that $C$ is an arc-identifying code. Hence, $vu\notin C$. If $|\omega^+(v)\setminus C|=1$, let say $\omega^+(v)\setminus C=\{vx\}$, then $(\{uv\}\cup\omega^-(u))\cap C=\{uv\}=N^-[vx]\cap C$, a contradiction. Therefore, $C$ satisfies $(ii)$.

Now, suppose that $C$ is a set of arcs of $D$ satisfying $(i)$ and $(ii)$, and let us show that $C$ is an arc-identifying code.
Let us show that $C$ is an arc-dominating set of $D$. Let $ab\in A(D)$. By $(i)$, $\omega^+(a)\subseteq C$ or $\omega^-(a)\cap C\neq\emptyset$, implying that $(\{ab\}\cup\omega^-(a))\cap C\neq\emptyset$. Therefore, $C$ is an arc-dominating set of $D$.
Next, let us prove that $C$ is an arc-separating set of $D$. On the contrary, suppose that there are two different arcs $ab$ and $cd$, such that $(\{ab\}\cup\omega^-(a))\cap C =(\{cd\}\cup\omega^-(c))\cap C$.
First, let us assume that $ab,cd\notin C$. If we take an arc $uv\in (\{ab\}\cup\omega^-(a))\cap C=(\{cd\}\cup\omega^-(c))\cap C$, then $v=a=c$, implying that $ab,cd\in\omega^+(v)\setminus C$, contradicting $(i)$.
Second, let us assume that $ab\in C$, hence, $c=b$. If $bd\notin C$, by $(ii)$, $((\omega^-(a)\cup \omega^-(b))\setminus \{ba,ab\})\cap C\ne\emptyset$, implying that $(\{ab\}\cup\omega^-(a))\cap C \neq (\{bd\}\cup\omega^-(b))\cap C$, a contradiction.  Therefore, $bd\in C$ implying that  $d=a$   because our assumption $(\{ab\}\cup\omega^-(a))\cap C =(\{cd\}\cup\omega^-(c))\cap C$. Hence, again by $(ii)$, $( (\omega^-(a)\cup\omega^-(b))\setminus\{ab, cd\})\cap C\neq\emptyset$, implying that $(\{ab\}\cup\omega^-(a))\cap C =(\{cd\}\cup\omega^-(c))\cap C$, a contradiction. Therefore, $C$ is an arc-separating set. This completes the proof.
\end{proof}

Now we present an algorithm for constructing an arc-identifying code of a given strongly connected oriented graph with minimum in-degree $\delta^-\geq2$.

\begin{algorithm}\label{Algo}
Constructing an arc-identifying code $C$ of a given strongly connected digraph $D$ with minimum in-degree $\delta^-\geq2$ and without digons.\\
1: Let $U^-:=\{v\in V(D): N^-(v)\subseteq V_1^+(D)\}$, $U:=\emptyset$ and $C:=\emptyset$\\
2: \textbf{while} $U^-\setminus U\neq\emptyset$ \textbf{do}\\
3: let $v\in U^-\setminus U$ and $f\in N^-(v)$\\
4: replace $U$ by $U\cup\{v\}$ and $C$ by $C\cup\{fv\}$\\
5: \textbf{end while}\\
6: let $X:=V_1^+(D)$ and $Y:= U^-$\\
7: let $xy\in A(D)$ such that $x\in V(D)\setminus X$ and $y\in V(D)\setminus Y$\\
8: replace $Y$ by $Y\cup (N^+(x)\setminus\{y\})$, $X$ by $X\cup\{x\}$ and $C$ by $C\cup(\omega^+(x)\setminus\{xy\})$\\
9: \textbf{while} $Y\neq V(D)$ \textbf{do}\\
10: \textbf{while} $N^-(y)\setminus X\neq\emptyset$ \textbf{do}\\
11: let  $t\in N^-(y)\setminus X$ and let $z\in N^+(t)\setminus \{y\}$\\
12: replace $Y$ by $Y\cup(N^+(t)\setminus\{z\})$, $X$ by $X\cup\{t\}$, $C$ by $C\cup(\omega^+(t)\setminus\{tz\})$, $t$ by $x$ and $z$ by $y$\\
13: \textbf{end while}\\
14: \textbf{if} $N^-(y)\setminus X=\emptyset$ \textbf{then}\\
15: choose an arc $uv$ of $D$ such that $v\notin Y$\\
16: replace $Y$ by $Y\cup(N^+(u)\setminus\{v\})$, $X$ by $X\cup\{u\}$, $C$ by $C\cup(\omega^+(u)\setminus\{uv\})$, $u$ by $x$ and $v$ by $y$\\
17: return to 3\\
18: \textbf{end if}\\
19:  \textbf{end while}\\
20: \textbf{if} $Y= V(D)$ \textbf{then}\\
21: \textbf{while} $X\neq V(D)$ \textbf{do}\\
22: let $u\in V(D)\setminus X$ and let $v\in N^+(u)$\\
23: replace $C$ by $C\cup(\omega^+(u)\setminus\{uv\})$, $X$ by $X\cup\{u\}$\\
24: \textbf{end while}\\
25: \textbf{end if}\\
26: return $C$
 \end{algorithm}

\begin{theorem}\label{uper} Let $D$ be an oriented and strongly connected graph with minimum in-degree $\delta^-\ge 2$. Then, Algorithm \ref{Algo} produces a subset $C\subset A(D)$ of $$|C|=|A(D)|-|V(D)|+|\{v\in V(D): N^-(v)\subseteq V^+_1(D)\}|,$$ satisfying the requirements of Theorem \ref{CharacConditionsOverC}.
\end{theorem}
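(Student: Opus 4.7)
The proof plan has three parts: count $|C|$, verify condition $(i)$ of Theorem~\ref{CharacConditionsOverC}, and verify condition $(ii)$; combined with that theorem this makes $C$ an arc-identifying code of $D$.

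For the cardinality, Phase~1 (lines 2--5) iterates exactly $|U^-|$ times, where $U^-:=\{v\in V(D):N^-(v)\subseteq V^+_1(D)\}$, and adds $|U^-|$ arcs to $C$, one per vertex of $U^-$. Lines 6--25 add to $X$ only vertices in $V(D)\setminus V^+_1(D)$, since $X$ is initialized to $V^+_1(D)$ and never shrinks; for each such $u$, exactly $d^+(u)-1$ arcs of $\omega^+(u)$ are inserted into $C$. The fallback branch at line 15 together with the cleanup at lines 21--24 force every $u\in V(D)\setminus V^+_1(D)$ to be processed exactly once before termination, yielding
\begin{equation*}
|C| \;=\; |U^-| \;+\; \sum_{u\notin V^+_1(D)}\bigl(d^+(u)-1\bigr) \;=\; |U^-| \;+\; |A(D)|-|V(D)|,
\end{equation*}
which matches the stated count.

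For condition~$(i)$, the bound $|\omega^+(v)\setminus C|\le 1$ is automatic when $v\in V^+_1(D)$ and holds with equality by construction for $v\in V(D)\setminus V^+_1(D)$. The second clause---$|\omega^+(v)\setminus C|=1$ forces $\omega^-(v)\cap C\ne\emptyset$---splits on whether $v\in U^-$ (handled directly by Phase~1) or not (in which case $v$ has an in-neighbour $u\notin V^+_1(D)$, and the processing step for $u$ can be arranged with distinguished out-arc different from $(u,v)$, so that $(u,v)\in C$). For condition~$(ii)$, the oriented-graph hypothesis eliminates the $vu\in C$ disjunct, so it suffices to handle $|\omega^+(v)\setminus C|=1$. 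I would prove by induction on the main loop (line 9) the invariant that every $w\in Y$ has at least one in-arc in $C$---either from Phase~1 (if $w\in U^-$) or from being the head of a non-withheld out-arc of a previously processed vertex. Applying this invariant to $v$ and, if needed, to $u$ yields the required element of $(\omega^-(v)\cup\omega^-(u))\setminus\{uv\}$ lying in $C$.

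The main obstacle I anticipate is maintaining this loop invariant through the fallback at line 15, where an arbitrary arc $uv$ with $v\notin Y$ is chosen and the algorithm ``returns to 3'': I must verify that the newly promoted heads in $N^+(u)\setminus\{v\}$ each acquire an in-arc in $C$ from the arcs of $\omega^+(u)\setminus\{uv\}$ just inserted, and that the thread of labels $(x,y)$ picked up after the reset continues to propagate the invariant correctly. Once this bookkeeping is settled, the absence of digons closes off the remaining corner cases uniformly, as already occurred in Theorem~\ref{LineIdentifyingdelta1}.
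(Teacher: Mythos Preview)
Your cardinality count is correct and more explicit than the paper's, and your overall ingredients are the right ones. But your argument for condition~$(i)$ has a real gap, and fixing it actually simplifies the rest of the proof.

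For~$(i)$ you write that if $v\notin U^-$ then $v$ has an in-neighbour $u\notin V^+_1(D)$, and ``the processing step for $u$ can be arranged with distinguished out-arc different from $(u,v)$, so that $(u,v)\in C$''. This ``can be arranged'' is not a proof: the algorithm fixes \emph{one} withheld out-arc per processed vertex, and you cannot choose it independently for each target~$v$. If $u$ has out-neighbours $v_1,\dots,v_r$, exactly one of them is the head of the withheld arc, and for that one your reasoning produces nothing. What actually rescues that vertex is that the main loop at line~9 will not terminate until it is covered from some \emph{other} in-neighbour---and this is precisely the content of the invariant you yourself state for~$(ii)$: every $w\in Y$ has an in-arc in $C$. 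Combined with the termination condition $Y=V(D)$, this invariant gives $\omega^-(v)\cap C\ne\emptyset$ for \emph{every} $v\in V(D)$ unconditionally, which is how the paper proceeds: each $v$ enters $Y$ either through Phase~1 (if $v\in U^-$, so $fv\in C$) or by lying in some $N^+(t)\setminus\{z\}$ (so $tv\in\omega^+(t)\setminus\{tz\}\subseteq C$).

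Once you have $\omega^-(v)\cap C\ne\emptyset$ for all $v$, condition~$(ii)$ becomes almost trivial, and shorter than your plan suggests. Since $D$ is oriented, $vu\notin A(D)$; so for $uv\in C$ with $|\omega^+(v)\setminus C|=1$, apply the universal fact to the \emph{tail} $u$: any $wu\in\omega^-(u)\cap C$ satisfies $wu\ne uv$ (different head) and $wu\ne vu$ (nonexistent), hence $wu\in((\omega^-(v)\cup\omega^-(u))\setminus\{uv,vu\})\cap C$. There is no need to invoke the invariant at $v$, and the fallback at line~15 creates no special difficulty (each newly added $w\in N^+(u)\setminus\{v\}$ is the head of $uw$, which was just placed in $C$).
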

\begin{proof} By construction, given Algorithm \ref{Algo}, we can check that for every $x\in V(D)$ we get $|\omega^+(x)\setminus C|\leq1$. Let $v\in V(D)$. Since the algorithm finishes when the set $Y$ is equal to $V(D)$, it follows that $v\in Y$ at a certain step of the algorithm. Then, $v\in N^+(t)\setminus \{z\}$ for a certain $t$ and $z$ in the algorithm, which implies that $tv\in \omega^+(t)\setminus \{tz\}\subset C$. Then,  $\omega^-(v)\cap C\ne \emptyset$ and Theorem \ref{CharacConditionsOverC} $(i)$ holds.
Finally, since $D$ is oriented, for all $uv\in C$, clearly $vu\not\in A(D)$, and we have $|(\omega^-(u)\cup (\omega^-(v)\setminus \{uv\}))\cap C|\ge 1$ because $\omega^-(u)\cap C\neq \emptyset$. Hence, Theorem \ref{CharacConditionsOverC} $(ii)$ also holds. Therefore, $C$ is an arc-identifying code of $D$ and $|C|=|A(D)|-|V(D)|+|\{v\in V(D): N^-(v)\subseteq V^+_1(D)\}|$.
\end{proof}

As a consequence of Theorems \ref{FirstLowerBound} and \ref{uper}, we can conclude the following corollary.

\begin{corollary} Let $D$ be a strongly connected oriented graph with minimum in-degree $\delta^-\ge 2$. Then, the following assertions hold.
\begin{itemize}
  \item[(i)] $\overrightarrow{\gamma}^{ID} (LD)=|A(D)|-|V(D)|+|\{v\in V(D): N^-(v)\subseteq V^+_1(D)\}|$ if $\delta^+=1$;
  \item[(ii)] $\overrightarrow{\gamma}^{ID} (LD)=|A(D)|-|V(D)|$ if $\delta^+\geq2$.
\end{itemize}
\end{corollary}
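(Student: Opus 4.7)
The plan is to pair the upper bound from Theorem~\ref{uper} with a matching lower bound. Theorem~\ref{uper} produces an arc-identifying code of size $|A(D)|-|V(D)|+|W|$, where $W:=\{v\in V(D):N^-(v)\subseteq V^+_1(D)\}$. In case (ii), $\delta^+\geq 2$ forces $V^+_1(D)=\emptyset$ and hence (since $\delta^-\geq 2$ makes every $N^-(v)$ non-empty) also $W=\emptyset$. Theorem~\ref{uper} then delivers a code of size $|A(D)|-|V(D)|$, matching the lower bound of Theorem~\ref{FirstLowerBound}, which settles (ii).

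For (i), the upper bound from Theorem~\ref{uper} is $|A(D)|-|V(D)|+|W|$, so the task reduces to proving the lower bound $\overrightarrow{\gamma}^{ID}(LD)\geq |A(D)|-|V(D)|+|W|$. Fix an arc-identifying code $C$ of $D$. For each $u\in V(D)$, Lemma~\ref{RepEnC} gives $|\omega^+(u)\cap C|\in\{d^+(u)-1,d^+(u)\}$, with the larger value occurring exactly when $\omega^+(u)\subseteq C$. Summing over $u$ yields $|C|=|A(D)|-|V(D)|+|\mathcal{A}|$, where $\mathcal{A}:=\{u\in V(D):\omega^+(u)\subseteq C\}$. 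It therefore suffices to show $|\mathcal{A}|\geq|W|$.

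My approach is to build an injection $\phi: W\hookrightarrow\mathcal{A}$. For each $w\in W$, Theorem~\ref{CharacConditionsOverC}(i) provides a candidate $\phi(w)\in K(w):=\{w\}\cup N^-(w)$ lying in $\mathcal{A}$: either $w\in\mathcal{A}$, in which case set $\phi(w)=w$; or some $u\in N^-(w)\subseteq V^+_1$ satisfies $uw\in C$, and then $\omega^+(u)=\{uw\}\subseteq C$ gives $u\in\mathcal{A}$, and we set $\phi(w)=u$. Injectivity is the heart of the argument. For distinct $w_1,w_2\in W$ the sets $N^-(w_1)$ and $N^-(w_2)$ are disjoint, because any common in-neighbor would have out-degree at least~$2$ and so could not lie in $V^+_1$; consequently $\phi(w_1)=\phi(w_2)$ can occur only when $w_1$ and $w_2$ are joined by an arc in $D$. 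In that single failure mode, say $w_1\to w_2$ with $w_1\in V^+_1\cap W\cap \mathcal{A}$, $w_1w_2\in C$, and $w_2\notin\mathcal{A}$, one applies Theorem~\ref{CharacConditionsOverC}(ii) to the arc $w_1w_2\in C$. This is legitimate because $D$ is oriented, so $w_2w_1\notin A(D)$, and because $|\omega^+(w_2)\setminus C|=1$. The conclusion is that some arc of $C$ lies in $\omega^-(w_1)\cup(\omega^-(w_2)\setminus\{w_1w_2\})$, yielding a fresh element of $\mathcal{A}\cap V^+_1$ that reroutes one of the representatives and restores injectivity.

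The main obstacle is exactly this injectivity verification, which is essentially Hall's marriage condition for the bipartite graph with parts $W$ and $\mathcal{A}$ and adjacency $w\sim a\iff a\in K(w)$. The only possible failure mode is a two-element conflict along an arc between two members of $W\cap V^+_1$, and Theorem~\ref{CharacConditionsOverC}(ii), which crucially requires the oriented hypothesis, is precisely what is needed to break it.
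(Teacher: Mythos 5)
Your treatment of part (ii) is correct and is exactly the paper's intended argument: $\delta^+\ge 2$ forces $W:=\{v: N^-(v)\subseteq V^+_1(D)\}=\emptyset$, so Theorem~\ref{uper} and Theorem~\ref{FirstLowerBound} meet. You are also right that part (i) needs the strengthened lower bound $\overrightarrow{\gamma}^{ID}(LD)\ge |A(D)|-|V(D)|+|W|$, which the paper never supplies (it cites only Theorem~\ref{FirstLowerBound}, which gives $|A(D)|-|V(D)|$). The problem is that your proof of this strengthened bound has a genuine gap, and in fact the bound --- hence assertion (i) as stated --- is false. The gap is in the ``rerouting'' step: the element of $\mathcal{A}$ produced by Theorem~\ref{CharacConditionsOverC}(ii) need not be fresh. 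When the conflict is $\phi(w_1)=\phi(w_2)=w_1$ along an arc $w_1\to w_2$ with $w_1\in W\cap V^+_1\cap\mathcal{A}$, the arc of $C$ guaranteed by condition (ii) may lie in $\omega^-(w_1)$ and have as its tail a vertex that is already somebody's representative; rerouting then creates a new conflict, and the cascade can terminate at an arc $uv\in C$ with $v\in\mathcal{A}$, where condition (ii) is not applicable and no further element of $\mathcal{A}$ is produced. Your Hall-type reformulation does not rescue this: Hall's condition genuinely fails on chains inside $W\cap V^+_1$.

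Concretely, take $V(D)=\{h_1,\dots,h_5\}\cup\{u_1,u_2,u_3,u_4\}\cup\{w_1,w_2,w_3\}$ with arcs $h_i\to h_{i+1},h_{i+2}$ (indices mod $5$), $h_1\to u_j$ and $h_2\to u_j$ for all $j$, $u_1\to w_1$, $u_2\to w_1$, $u_3\to w_2$, $u_4\to w_3$, and $w_1\to w_2\to w_3\to h_3$. This digraph is oriented, strongly connected, has $\delta^-=2$, $\delta^+=1$, $|V(D)|=12$, $|A(D)|=25$, and $W=\{w_1,w_2,w_3\}$, so assertion (i) claims $\overrightarrow{\gamma}^{ID}(LD)=16$. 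However, the set $C$ obtained from $A(D)$ by deleting the ten arcs $u_1w_1$, $u_2w_1$, $u_3w_2$, $u_4w_3$, $w_3h_3$, $h_1h_2$, $h_2h_3$, $h_3h_4$, $h_4h_5$, $h_5h_1$ has $|C|=15$ and satisfies both conditions of Theorem~\ref{CharacConditionsOverC}: every vertex loses at most one out-arc; every vertex that loses one keeps an in-arc in $C$ (e.g.\ $w_2w_3$ for $w_3$, $h_1u_j$ for $u_j$, $h_4h_1$ for $h_1$, etc.); and for every $uv\in C$ whose head $v$ loses an out-arc, a second arc of $C$ enters $u$ or $v$ (for $w_2w_3$ it is $w_1w_2$; for $h_1u_j$ it is $h_2u_j$; for the hub arcs it is the surviving in-arc of the tail). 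Hence $C$ is an arc-identifying code of size $15<16$. Here $\mathcal{A}=\{w_1,w_2\}$ while $|W|=3$: this is precisely the configuration $w_1\to w_2\to w_3$ in $W\cap V^+_1$ where $\phi(w_3)$ is forced to equal $w_2$, rerouting $\phi(w_2)$ lands on the already-used $w_1$, and condition (ii) cannot be applied to $w_1w_2$ because $w_2\in\mathcal{A}$. So the difficulty you flagged as ``the heart of the argument'' is not merely unproven; it is where the statement itself breaks down, and only the inequality $\overrightarrow{\gamma}^{ID}(LD)\le |A(D)|-|V(D)|+|W|$ from Theorem~\ref{uper} survives in case (i).
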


Next, we also present a result for all Hamiltonian digraphs of minimum degree at least two, not necessarily oriented.

\begin{theorem}
\label{Hamil}
 Let $D$ be a Hamiltonian strongly connected digraph with minimum in-degree $\delta^-\ge 3$ and out-degree $\delta^+\ge 2$. Then, $\gamma^{ID} (LD)=|A(D)|-|V(D)|$.
\end{theorem}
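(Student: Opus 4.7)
The plan is to match the lower bound from Theorem \ref{FirstLowerBound} with an explicit construction based on a Hamiltonian cycle. Let $H: v_1\to v_2\to\cdots\to v_n\to v_1$ be a Hamiltonian cycle of $D$ and define
\[
C:=A(D)\setminus A(H).
\]
By construction $|C|=|A(D)|-|V(D)|$, each vertex $v_i$ has exactly one removed out-arc (its Hamiltonian successor arc $v_iv_{i+1}$) and exactly one removed in-arc (its Hamiltonian predecessor arc $v_{i-1}v_i$). The task is to verify that $C$ meets the two conditions of Theorem \ref{CharacConditionsOverC}; combined with the lower bound this forces $\gamma^{ID}(LD)=|A(D)|-|V(D)|$.

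For condition $(i)$, the construction directly gives $|\omega^+(v_i)\setminus C|=1$ for every $v_i$. For the domination part I must show $\omega^-(v_i)\cap C\neq\emptyset$: since only one in-arc of $v_i$ (the Hamiltonian predecessor) is deleted and $d^-(v_i)\ge 3$, at least two in-arcs of $v_i$ remain in $C$, so $\omega^-(v_i)\cap C\ne\emptyset$.

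For condition $(ii)$, because $|\omega^+(v)\setminus C|=1$ holds at every vertex, the hypothesis of $(ii)$ is triggered for every $uv\in C$, so I must exhibit an arc of $C$ inside $(\omega^-(u)\cup\omega^-(v))\setminus\{uv,vu\}$. It suffices to work with $\omega^-(u)\setminus\{uv,vu\}$. Since $uv\notin\omega^-(u)$, this set equals $\omega^-(u)\setminus\{vu\}$, which has at least $\delta^--1\ge 2$ elements (with or without a digon $\{uv,vu\}$). Among these elements, at most one — the Hamiltonian predecessor arc of $u$ — is excluded from $C$, so at least one remains in $C$. In particular, if $vu$ happens to be itself the Hamiltonian predecessor arc of $u$, then $\omega^-(u)\setminus\{vu\}$ contains no Hamiltonian arcs and all of its $\ge 2$ elements lie in $C$.

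There is no serious obstacle here: Hamiltonicity provides a canonical choice of one removable out-arc at each vertex, matching the lower bound, while the hypothesis $\delta^-\ge 3$ supplies the slack of at least two surviving non-Hamiltonian in-arcs per vertex, which is precisely what is needed to discharge both the domination part of $(i)$ and the separation part of $(ii)$, even in the presence of digons. The condition $\delta^+\ge 2$ is implicit (since the Hamiltonian arcs already provide one out-arc at every vertex, the construction never empties $\omega^+(v)$ within $C$ in an obstructive way).
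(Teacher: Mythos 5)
Your proposal is correct and follows essentially the same route as the paper: take $C=A(D)\setminus A(H)$ for a Hamiltonian cycle $H$, verify conditions $(i)$ and $(ii)$ of Theorem~\ref{CharacConditionsOverC} using $\delta^-\ge 3$ (you argue $(ii)$ via $\omega^-(u)$ where the paper uses $\omega^-(v)$, an immaterial difference), and conclude by matching the lower bound of Theorem~\ref{FirstLowerBound}.
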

\begin{proof} Let $H=(x_1,x_2, \ldots, x_n,x_1) $ denote a Hamiltonian cycle of $D$. Let $C=A(D)\setminus A(H)$. Let us show that $C$ satisfies the requirements of Theorem \ref{CharacConditionsOverC}. By definition of $C$, $|\omega^+(v)\setminus C|=1$ and $|\omega^-(v)\setminus C|=1$. Since $\delta\geq2$, Theorem \ref{CharacConditionsOverC} $(i)$ follows directly. To show Theorem \ref{CharacConditionsOverC} $(ii)$,
observe that $|\omega^-(v)\setminus C|=1$ implies that $|\omega^-(v)\cap C|=d^-(v)-1\geq2$ because $\delta^-\geq3$. Therefore, for all $uv\in C$, $( (\omega^-(v)\cup\omega^-(u))\setminus\{uv, vu\})\cap C\neq\emptyset$ Theorem \ref{CharacConditionsOverC} $(ii)$ holds. Thus, $C$ is an arc-identifying code of $D$ and $\gamma^{ID} (LD)=|A(D)|-|V(D)|$ by Theorem \ref{FirstLowerBound}.
\end{proof}

\begin{corollary}
\label{CoroKautz-d3}
The identifying number of a Kautz digraph $K(d,k)$ is $\gamma^{ID} (K(d,k))=d^{k}-d^{k-2}$ for $d\geq3$ and $k\geq2$.
\end{corollary}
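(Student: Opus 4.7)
The plan is to reduce to Theorem \ref{Hamil} via the line-digraph representation $K(d,k)\cong L K(d,k-1)$, which is valid because $K(d,k)\cong L^{k-1}K_{d+1}$ and hence $K(d,k)=L(L^{k-2}K_{d+1})=L K(d,k-1)$ for every $k\ge 2$. Therefore the identifying number $\gamma^{ID}(K(d,k))$ is exactly the quantity $\gamma^{ID}(LD)$ for $D=K(d,k-1)$, and it suffices to check that $D$ satisfies all the hypotheses of Theorem \ref{Hamil}; the conclusion then gives $\gamma^{ID}(K(d,k))=|A(D)|-|V(D)|$.

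Next I would verify the four hypotheses on $D=K(d,k-1)$: (a) strong connectivity (classical for Kautz digraphs, and immediate for $k=2$ since $D=K_{d+1}$); (b) $d$-regularity, so $\delta^-=\delta^+=d\ge 3\ge 2$; (c) Hamiltonicity. For $k=2$, $D=K_{d+1}$ is trivially Hamiltonian since $d+1\ge 4$ and every symmetric complete digraph on at least two vertices is Hamiltonian. For $k\ge 3$ one invokes the classical fact that Kautz digraphs of degree $d\ge 2$ admit a Hamilton cycle (this can also be seen inductively: if $D$ is Hamiltonian and $\delta\ge 2$, then $LD$ is Hamiltonian, since a Hamiltonian cycle of $LD$ may be constructed from an Eulerian circuit of $D$, and Eulerianity is guaranteed by regularity and strong connectivity).

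The last step is the numerical computation. The order of the Kautz digraph is $|V(K(d,k-1))|=d^{k-1}+d^{k-2}$, which can either be quoted as a standard fact or derived inductively from $|V(LH)|=|A(H)|$ together with $d$-regularity: $|V(L^{k-1}K_{d+1})|=d^{k-2}|A(K_{d+1})|/\!\cdots$, or more simply from $|V(K(d,1))|=d+1=d+d^0$ and $|V(K(d,j+1))|=d\,|V(K(d,j))|$ up to the appropriate telescoping. By $d$-regularity,
\[
|A(D)|=d\,|V(D)|=d(d^{k-1}+d^{k-2})=d^{k}+d^{k-1}.
\]
Applying Theorem \ref{Hamil} then yields
\[
\gamma^{ID}(K(d,k))=|A(D)|-|V(D)|=(d^{k}+d^{k-1})-(d^{k-1}+d^{k-2})=d^{k}-d^{k-2},
\]
which is the claimed formula.

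The only step requiring genuine input beyond the results already stated in the paper is the Hamiltonicity of $K(d,k-1)$; everything else is either a direct application of Theorem \ref{Hamil} or elementary arithmetic with the known order of a Kautz digraph. Thus the main (and essentially only) obstacle is producing or quoting a Hamilton cycle of $K(d,k-1)$, which is well documented in the literature on de Bruijn and Kautz networks.
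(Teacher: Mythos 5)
Your proposal is correct and follows essentially the same route as the paper: identify $K(d,k)=LK(d,k-1)$, apply Theorem \ref{Hamil} to the Hamiltonian, $d$-regular, strongly connected digraph $K(d,k-1)$, and compute $|A|-|V|=(d^k+d^{k-1})-(d^{k-1}+d^{k-2})=d^k-d^{k-2}$. The only difference is that you supply a justification for the Hamiltonicity of $K(d,k-1)$ (via Eulerian circuits of the previous iterate), which the paper simply asserts.
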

\begin{proof} Note that $K(d,2)=LK_{d+1}$. Since $K_{d+1}$ is Hamiltonian and $d\ge 3$, by Theorem \ref{Hamil}, $\gamma^{ID} (K(d,2))=\gamma^{ID} (LK_{d+1})=d(d+1)-(d+1)=d^2-1$, and the result holds for $k=2$.
For any $k\geq3$, the Kautz digraph $K(d,k)=L^{k-1}K_{d+1}=LL^{k-2}K_{d+1}=LK(d,k-1)$. Since $K(d,k-1)$ is a Hamiltonian digraph and $d\geq 3$, by Theorem \ref{Hamil}, $\gamma^{ID} (K(d,k))=\gamma^{ID} (LK(d,k-1))=d^{k}+d^{k-1}-(d^{k-1}+d^{k-2})=d^{k}-d^{k-2}$ and the result holds.
\end{proof}

To extend the Corollary \ref{CoroKautz-d3} to $K(2,k)$ we need the 1-factorization of Kautz digraphs obtained by Tvrd\'ik \cite{T94}. This 1-factorization uses the following operation.

\begin{definition}\cite{T94}
\label{sigma}
If $x=x_1\dots x_k\in V(K(d,k))$, then
\begin{itemize}
\item $\sigma_1(x)=x_2\dots x_{k-1}x_kx_1$ if $x_1\neq x_k$
\item $\sigma_1(x)=x_2\dots x_{k-1}x_kx_2$ if $x_1= x_k$
\end{itemize}

Let $Inc:V(K(d,k))\times \mathbb{Z}_d\rightarrow V(K(d,k))$ denote a binary operation such that
$$Inc(x_1\dots x_{k-1}x_k,i)=x_1\dots x_{k-1}x'_k$$ where
$$x'_k=\left\{\begin{array}{ll}
        x_k+i~ \mod(d+1) & \mbox{if } x_{k-1}>x_k\mbox{ and } x_{k-1}>x_k+i\\
        &\mbox{or }x_{k-1}<x_k \mbox{ and }x_{k-1}+d+1>x_k+i;\\
        x_k+i+1~\mod(d+1) &\mbox{ otherwise.}
      \end{array}\right.  $$
 Then, the generalized $K$-shift operation is defined as follows:
 $$\begin{array}{ll} \sigma^{+i}_1(x)&=Inc(\sigma_1(x),i),\\
\sigma^{+i}_k &= \sigma^{+i}_1 \circ\sigma^{+i}_{k-1}.\end{array}  $$
\end{definition}
\begin{theorem}\label{factor}\cite{T94} The arc set of $K(d,k)$ can be partitioned into $d$ 1-factors $\mathcal{F}_0,\ldots \mathcal{F}_{d-1}$ such that  the cycles of $\mathcal{F}_i$ are closed under the operation $\sigma^{+i}_1$.
\end{theorem}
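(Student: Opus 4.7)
The plan is to define $\mathcal{F}_i$ directly as the arc set $\{(x,\sigma_1^{+i}(x)) : x \in V(K(d,k))\}$ and then verify three facts: \textbf{(a)} each such pair is actually an arc of $K(d,k)$; \textbf{(b)} for fixed $x$, the map $i \mapsto \sigma_1^{+i}(x)$ is a bijection between $\mathbb{Z}_d$ and $N^+(x)$; and \textbf{(c)} for fixed $i$, the map $\sigma_1^{+i}\colon V(K(d,k)) \to V(K(d,k))$ is a permutation. Property (b) will give simultaneously that the $\mathcal{F}_i$ are pairwise disjoint and that $\bigcup_{i=0}^{d-1}\mathcal{F}_i = A(K(d,k))$, while (c) will give that each $\mathcal{F}_i$ is a 1-factor. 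The required closure of the cycles of $\mathcal{F}_i$ under $\sigma_1^{+i}$ is then automatic, since by construction those cycles are precisely the orbits of the permutation $\sigma_1^{+i}$.

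For (a), note that if $x = x_1\ldots x_k$ then by definition the first $k-1$ coordinates of $\sigma_1(x)$ are $x_2\ldots x_k$, an allowed Kautz subword, and its last coordinate is either $x_1$ (when $x_1 \neq x_k$) or $x_2$ (when $x_1 = x_k$), in both cases distinct from $x_k$. The operator $Inc(\cdot,i)$ modifies only the last coordinate, and the two-case rule in its definition is precisely designed so that the new last coordinate $x'_k$ is still different from the penultimate entry $x_k$. Hence $\sigma_1^{+i}(x)$ is a valid Kautz vertex whose first $k-1$ coordinates are $x_2\ldots x_k$, i.e., it is an out-neighbor of $x$ in $K(d,k)$.

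For (b), fix $x$ and write $z = \sigma_1(x)$. As $i$ ranges over $\mathbb{Z}_d$, the dichotomy in the definition of $Inc$ lets the new last coordinate $x'_k$ run bijectively through the $d$ residues of $\mathbb{Z}_{d+1}\setminus\{z_{k-1}\} = \mathbb{Z}_{d+1}\setminus\{x_k\}$: informally, the rule just ``skips'' the forbidden value $z_{k-1}$ when shifting by $i$. Thus $i \mapsto \sigma_1^{+i}(x)$ enumerates each out-neighbor of $x$ exactly once, which forces pairwise arc-disjointness of the $\mathcal{F}_i$ and, by $d$-regularity, $\bigcup_i \mathcal{F}_i = A(K(d,k))$.

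For (c), since $V(K(d,k))$ is finite, injectivity suffices. From $\sigma_1^{+i}(x) = \sigma_1^{+i}(y)$ one reads off $x_j = y_j$ for $2 \leq j \leq k$; part (b) implies that $Inc(\cdot,i)$ is invertible on its image, hence $\sigma_1(x)_k = \sigma_1(y)_k$; and then the two-case definition of $\sigma_1$ must be reconciled with this common value to recover $x_1 = y_1$. \emph{This final reconciliation is the main obstacle}, because $\sigma_1$ behaves differently depending on whether $x_1 = x_k$ or $x_1 \neq x_k$, and one must verify that no mixed collision (e.g.\ $x_1 = x_k$ while $y_1 \neq y_k$) produces the same image. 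The Kautz constraint $x_{k-1} \neq x_k$ on the shared penultimate coordinate is what ultimately rules out those mixed collisions and closes the argument.
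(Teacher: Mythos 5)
The paper gives no proof of this statement at all: Theorem~\ref{factor} is quoted verbatim from Tvrd\'ik \cite{T94}, so there is no internal argument to compare yours against. Judged on its own, your proof is structurally sound and is essentially the construction Tvrd\'ik intended: taking $\mathcal{F}_i=\{(x,\sigma_1^{+i}(x)):x\in V(K(d,k))\}$, your step (a) is correct (the two-case rule in $Inc$ is exactly the ``cyclic shift by $i$ inside $\mathbb{Z}_{d+1}\setminus\{x_k\}$'' you describe, so the last letter stays distinct from the penultimate one), step (b) correctly yields that the $\mathcal{F}_i$ partition the arc set, and step (c) together with (b) gives that each $\mathcal{F}_i$ is a 1-factor whose cycles are the orbits of the permutation $\sigma_1^{+i}$, hence closed under it.

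The one concrete flaw is in the final reconciliation of step (c): you attribute the exclusion of mixed collisions to the constraint $x_{k-1}\neq x_k$ on the shared penultimate coordinate, but that constraint does no work there. In the mixed case (say $x_1\neq x_k$ and $y_1=y_k$, with $x_2\dots x_k=y_2\dots y_k$), the last letters of $\sigma_1(x)$ and $\sigma_1(y)$ are $x_1$ and $y_2=x_2$ respectively, so equality of the images forces $x_1=x_2$, which violates the Kautz adjacency constraint \emph{between the first two letters} of $x$ (symmetrically, the other mixed case forces $y_1=y_2$). With that correction the injectivity of $\sigma_1$, and hence of $\sigma_1^{+i}$ (using that $Inc(\cdot,i)$ is a bijection of $\mathbb{Z}_{d+1}\setminus\{p\}$ for each fixed penultimate letter $p$), goes through and the proof is complete. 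A minor caveat: the argument needs $k\geq 2$ for $\sigma_1$ to be defined, which is the only range in which the theorem is used.
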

\begin{theorem}
\label{TheoKautz-d2}
The identifying number of a Kautz digraph $K(2,k)$ is $\gamma^{ID} (K(2,k))=2^{k}-2^{k-2}$ for $k\geq2$.
\end{theorem}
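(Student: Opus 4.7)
The plan is to match the lower bound $\gamma^{ID}(K(2,k)) \ge 2^k - 2^{k-2}$ given by Theorem \ref{FirstLowerBound} applied to $D = K(2,k-1)$ (which is $2$-regular, of order $2^{k-1}+2^{k-2}$, so $|A(D)| - |V(D)| = 2^k - 2^{k-2}$) with an explicit arc-identifying code of $D$ of exactly that size. The mechanism is analogous to Theorem \ref{Hamil}, but since $\delta^-(D) = 2$ we cannot afford to remove a single Hamilton cycle: instead I would remove a $1$-factor that swallows every digon of $D$, which turns out to be one of the two canonical factors of Tvrd\'{\i}k's decomposition.

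For $k \ge 3$, let $\mathcal{F}_0,\mathcal{F}_1$ be the $1$-factors provided by Theorem \ref{factor} applied to $D = K(2,k-1)$, and set $C := \mathcal{F}_1$. Then $|C| = |V(D)| = 2^k - 2^{k-2}$, and condition $(i)$ of Theorem \ref{CharacConditionsOverC} is automatic because each vertex has exactly one out-arc and one in-arc in each factor. For $(ii)$, fix $uv \in C$. Since $\omega^-(v) \cap C = \{uv\}$, the only way to produce an element of $((\omega^-(u) \cup \omega^-(v)) \setminus \{uv,vu\}) \cap C$ is via the unique arc of $\omega^-(u) \cap C$. So the whole condition reduces to verifying $vu \notin C$ whenever $vu \in A(D)$, that is, to the claim that \emph{$\mathcal{F}_1$ contains no digon of $K(2,k-1)$}.

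The main technical step is this digon claim, which I would prove by pinning down the action of $\sigma_1^{+0}$. A digon of a Kautz digraph must have the form $\{u,v\}$ with $u = abab\cdots$ and $v = baba\cdots$ two alternating strings on distinct letters $a,b \in \{0,1,2\}$. A direct reading of Definition \ref{sigma}, splitting according to the parity of the string length $k-1$ (to settle whether $u_1 = u_{k-1}$ or not), gives $\sigma_1(u) = v$ and $\sigma_1(v) = u$. Furthermore, when $i = 0$ the map $Inc(\cdot,0)$ is the identity on any string whose last two symbols differ: if $x_{k-1} > x_k$ then $x_{k-1} > x_k + 0$ is immediate, and if $x_{k-1} < x_k$ then $x_{k-1} + 3 > x_k$ is automatic as $x_k \le 2$. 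Hence $\sigma_1^{+0}(u) = v$ and $\sigma_1^{+0}(v) = u$, so both arcs of the digon live in $\mathcal{F}_0$, as required. This compact but slightly fussy case-analysis is where I expect the most care.

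Finally, the base case $k = 2$ falls outside Theorem \ref{factor} because Definition \ref{sigma} needs strings of length at least two; here I treat $D = K(2,1) = K_3$ directly. Take $C = \{(0,1),(1,2),(2,0)\}$, a directed Hamilton cycle of $K_3$: its complement is the reverse Hamilton cycle, so $C$ contains no digon arc, and Theorem \ref{CharacConditionsOverC} is verified in a line. Together with the matching lower bound, this yields $\gamma^{ID}(K(2,2)) = 3 = 2^2 - 2^0$, completing the argument.
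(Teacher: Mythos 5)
Your proposal is correct and follows essentially the same route as the paper: the lower bound from Theorem \ref{FirstLowerBound}, the upper bound by taking the Tvrd\'{\i}k factor $\mathcal{F}_1$ of $K(2,k-1)$ and checking Theorem \ref{CharacConditionsOverC} after showing that $\sigma_1^{+0}$ fixes the alternating-string digons (so they all lie in $\mathcal{F}_0$), and a direct check of a Hamilton cycle of $K_3$ for $k=2$. You merely spell out the ``it is not difficult to see'' steps that the paper leaves implicit.
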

\begin{proof} It is easy to check in Figure \ref{fig:exemple} that $C=\{uv, vw, wu\}$ is an identifying code of $ K(2,2)$, then $\gamma^{ID} (K(2,2))=3$ and the theorem holds for $k=2$. Suppose that $k \ge     3$ and let us study the Kautz digraph $K(2,k-1)$.
By Theorem \ref{factor}, we can consider a partition of the arcs of $K(2,k-1)$ into two 1-factors $\mathcal{F}_0$ and $\mathcal{F}_1$, such that the cycles of $\mathcal{F}_i$ are closed under the operation called $\sigma_1^{+i}$, given in Definition \ref{sigma}. It is not difficult to see that  the relation $\sigma_1^{+0}$ preserves digons, implying that all the digons of $K(2,k-1)$ belong to the family $\mathcal{F}_0$. Hence, since $\mathcal{F}_1$ is a 1-factor of $K(2,k-1)$, it is clear that the set of arcs in $\mathcal{F}_1$, say $A_1$, satisfies the conditions of Theorem \ref{CharacConditionsOverC}. Therefore,  $A_1$ is an arc-identifying code of $K(2,k-1)$, that is, an identifying code of $K(2,k)$. Therefore, $\gamma^{ID} (K(2,k))=|A_1|=|V(K(2,k-1)|=3\cdot 2^{k-2}=2^{k}-2^{k-2}$, and the proof is complete.
\end{proof}

\bibliographystyle{plain}

\end{document}